\documentclass[reqno,12pt]{amsart}




\textheight20cm \topmargin-0.3cm \oddsidemargin7mm

\evensidemargin7mm \textwidth14cm \headsep0.8cm \headheight0.4cm





\numberwithin{equation}{section}





\usepackage{amsmath}

\usepackage{esint} 

\usepackage{amsthm}

\usepackage{calligra}

\usepackage{epsfig}

\usepackage{psfrag}

\usepackage{graphicx}

\usepackage{graphpap,latexsym,epsf}

\usepackage{color}

\usepackage{amssymb,mathrsfs,enumerate}

\usepackage{endnotes}

\usepackage{calligra}

\usepackage[colorinlistoftodos]{todonotes} 

\definecolor{citegreen}{rgb}{0,0.6,0}

\definecolor{refred}{rgb}{0.8,0,0}

\usepackage[colorlinks, citecolor=citegreen, linkcolor=refred]{hyperref}

\usepackage{enumitem}






\newcommand{\R}{\mathbb{R}}

\newcommand{\Z}{\mathbb{Z}}

\newcommand{\SSS}{\mathbb{S}}

\def\HHH{{\rm H}}

\def\RRR{{\mathrm R}}

\newcommand{\pa}{\partial}

\newcommand{\Om}{\Omega}

\newcommand{\ep}{\varepsilon}


\newcommand{\Ric}{{\rm Ric}}

\newcommand{\Ro}{{\rm R}}


\newcommand{\na}{\nabla}

\mathchardef\emptyset="001F




\definecolor{vgreen}{rgb}{0.1,0.5,0.2}

\definecolor{viola}{RGB}{85,26,139}




\newtheorem{theorem}{Theorem}[section]

\newtheorem{corollary}[theorem]{Corollary}

\newtheorem{remark}[theorem]{Remark}

\newtheorem{lemma}[theorem]{Lemma}




\definecolor{byzantium}{rgb}{0.44, 0.16, 0.39}

\definecolor{amber}{rgb}{1.0, 0.75, 0.0}

\definecolor{darkmagenta}{rgb}{0.55, 0.0, 0.55}

\definecolor{fuzzywuzzy}{rgb}{0.8, 0.4, 0.4}

\definecolor{brown}{rgb}{0.2, 0.08, 0.08}

\definecolor{arancio}{rgb}{1.0, 0.13, 0.0}



\addtolength{\textwidth}{2.4cm}



\addtolength{\textheight}{2cm}



\addtolength{\voffset}{-1cm}

\hoffset=-19pt


\addtolength{\footskip}{1cm}

\begin{document}
\title{{A Green's function proof of the Positive Mass Theorem}}
\author[V.~Agostiniani]{Virginia Agostiniani}
\address{V.~Agostiniani, Universit\`a degli Studi di Trento,
via Sommarive 14, 38123 Povo (TN), Italy}
\email{virginia.agostiniani@unitn.it}
\author[L.~Mazzieri]{Lorenzo Mazzieri}
\address{L.~Mazzieri, Universit\`a degli Studi di Trento,
via Sommarive 14, 38123 Povo (TN), Italy}
\email{lorenzo.mazzieri@unitn.it}
\author[F.~Oronzio]{Francesca Oronzio}
\address{F.~Oronzio, Universit\`a degli Studi di Roma ``La Sapienza”, Piazzale Aldo Moro 5, 00185 Roma (RO), Italy}
\email{francesca.oronzio@uniroma1.it}
\maketitle

\begin{abstract}
In this paper, a new proof of the Positive Mass Theorem is established through a newly discovered monotonicity formula, holding along the level sets of the Green's function of an asymptotically flat $3$-manifold. In the same context and for $1<p<3$, a Geroch-type calculation is performed along the level sets of $p$-harmonic functions, leading to a new proof of the Riemannian Penrose Inequality {under favourable assumptions}. A new characterisation of scalar curvature lower bounds in terms of the monotonicity formulas is also given. 
\end{abstract}

\bigskip

\noindent\textsc{MSC (2020): 
53C21, 31C12, 31C15, 53C24, 53Z05. 
}


\smallskip
\noindent{\underline{Keywords}:  
monotonicity formulas, Green's function, ADM mass,
geometric inequalities.}


\section{A monotonic quantity for the Green's functions}\label{effmon}


In the context of contemporary geometric analysis, monotonicity formulas are known to play a central role not only because of their tremendous implications, but also because they clarify the theory, eventually leading to a deeper understanding of the overall picture. Some of the most relevant examples come from the study of geometric flows such as, e.g., Huisken's monotonicity formulas for the Mean Curvature Flow~\cite{Hui_1990}, Perelman's entropy formulas for the Ricci Flow~\cite{PERELMAN}, or Geroch's monotonicity of the Hawking Mass along the Inverse Mean Curvature Flow~\cite{Ger,Hui_Ilm_2001}. However, this is not the only scene where these formulas play a dominant role. For example, it must be noticed that a monotonicity formula is also at the core of comparison geometry, through the nowadays classical Bishop-Gromov Volume Comparison Theorem. Starting from this observation, Colding~\cite{Colding_Acta} and Colding-Minicozzi~\cite{Col_Min_2,Col_Min_3} discovered in recent years analogous monotonic quantities on manifolds with nonnegative Ricci curvature, where the level set flow of the distance function is replaced by the level set flow of a harmonic function. This new class of monotonicity formulas, together with their extension to the case of $p$-harmonic functions, $1<p<n$, has revealed to be extremely flexible and powerful, leading to the proof of new geometric inequalities~\cite{Ago_Fog_Maz_1,Ben_Fog_Maz_1,Xia_Yin} as well as to a new proof of classical results~\cite{Ago_Maz_CV,Ago_Fog_Maz_2}. More in general, level set methods for harmonic functions have been recently employed to investigate the geometry of asymptotically flat initial data in general relativity~\cite{Ago_Maz_CMP,Ago_Maz_Oro_1,Bra_Kaz_Khu_Ste_2019,Hir_Kaz_Khu}. Further comments on the latter point will be given in the next section. Here, we establish the following monotonicity result, holding along the level set flow of the Green's functions on complete nonparabolic $3$-manifolds with nonnegative scalar curvature, whose topology is sufficiently simple. 

\begin{theorem}\label{effectivemonotonicity}
Let $(M,g)$ be a complete, noncompact, nonparabolic $3$-dimensional Riemannian manifold with nonnegative scalar curvature and $H_2(M;\Z)=\{0\}$. Let $u$ be the maximal distributional solution to 
\begin{equation}\label{f0}
\begin{cases}
\Delta u=4\pi\delta_{o}\  &\mathrm{in} \  M,\\
\quad u \to 1 &\mathrm{at} \  \infty ,
\end{cases}
\end{equation}
for some $o \in M$, and let $F:(0, + \infty)\to \R$ be the function defined as 
\begin{equation}\label{eq0}
F(t)\,\, := \,\, 4\pi t \,\,-\,\,\, t^{2} \!\!\!\!\!\!\!\!\!\!\!\!\int\limits_{\{u=1-\frac{1}{t}\}\setminus\mathrm{Crit}(u)}\!\!\!\!\!\!\!\!\!\!\!\!  \vert \nabla u \vert\, \mathrm{H} \,\, d\sigma \, \, + \,\, t^{3} \!\!\!\!\!\!\!\!\!\!\!\! \int\limits_{\{u=1-\frac{1}{t}\}\setminus\mathrm{Crit}(u)}\!\!\!\!\!\!\!\!\!\!\!\! \vert \nabla u \vert^{2} \,\, d\sigma \,, 
\end{equation}
where $\mathrm{Crit}(u)$ is the set of the critical points of $u$,
and
$\mathrm{H}$ is the mean curvature of the level set $\{u= 1 - 1/t\}\setminus\mathrm{Crit}(u)$ computed with respect to the $\infty$--pointing unit normal vector field $\nu={\nabla u}/{\vert \nabla u \vert}$. Then, we have that 
\begin{equation*}
0 < s \leq t < +\infty \quad \Rightarrow \quad  F(s) \, \leq  \, F(t) \, ,
\end{equation*}
provided $1- 1/s$ and $1 - 1/t$ are regular values of $u$. 
\end{theorem}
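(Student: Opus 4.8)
The plan is to differentiate $F$ along the level-set flow of $u$, to prove that $F'(t)\ge 0$ at every $t$ for which $1-\tfrac1t$ is a regular value of $u$, and then to upgrade this pointwise inequality to the claimed monotonicity. Set $c(t):=1-\tfrac1t$, so that $c'(t)=t^{-2}$ and $c(t)$ ranges over $(-\infty,1)$, which is the range of $u$ (near the pole $o$ one has $u\to-\infty$, and $u\to 1$ at infinity). For a regular value $c$ the level set $\Sigma:=\{u=c\}$ is a smooth compact hypersurface, disjoint from $\mathrm{Crit}(u)$ and---since $c>-\infty$---from $o$, so that $\Delta u\equiv 0$ on a neighbourhood of $\Sigma$; as $c$ varies, $\Sigma$ evolves by its normal flow with speed $\psi:=1/|\nabla u|$ in the direction $\nu=\nabla u/|\nabla u|$. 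Write $U(t):=\int_{\{u=c(t)\}}|\nabla u|^2\,d\sigma$ and $V(t):=\int_{\{u=c(t)\}}|\nabla u|\,\mathrm H\,d\sigma$, so that $F(t)=4\pi t-t^2V(t)+t^3U(t)$. From $\Delta u=0$ near $\Sigma$ and the standard splitting $\Delta u=\nabla^2u(\nu,\nu)+|\nabla u|\,\mathrm H$ one obtains $\partial_\nu|\nabla u|=\nabla^2u(\nu,\nu)=-|\nabla u|\,\mathrm H$; combining this with the first variation of the area form, $\partial_c(d\sigma)=\psi\,\mathrm H\,d\sigma$, yields
\[
\frac{d}{dc}\int_{\{u=c\}}|\nabla u|^2\,d\sigma\;=\;-\int_{\{u=c\}}|\nabla u|\,\mathrm H\,d\sigma,\qquad\text{that is}\qquad U'(t)=-\,t^{-2}\,V(t).
\]

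For $V'$ I would instead invoke the evolution of the mean curvature under the normal flow of speed $\psi$, namely $\psi\,\partial_\nu\mathrm H=-\Delta_\Sigma\psi-\psi\big(|\mathrm A|^2+\Ric(\nu,\nu)\big)$, with $\mathrm A$ the second fundamental form of $\Sigma$; together with $\partial_\nu|\nabla u|=-|\nabla u|\,\mathrm H$ this gives $\tfrac{d}{dc}\int_{\{u=c\}}|\nabla u|\,\mathrm H\,d\sigma=\int_{\{u=c\}}\partial_\nu\mathrm H\,d\sigma$. An integration by parts on the closed surface $\Sigma$ then turns $\int_\Sigma\partial_\nu\mathrm H\,d\sigma=-\int_\Sigma\big(|\nabla u|\,\Delta_\Sigma(1/|\nabla u|)+|\mathrm A|^2+\Ric(\nu,\nu)\big)\,d\sigma$ into $-\int_\Sigma\big(|\nabla_\Sigma|\nabla u||^2/|\nabla u|^2+|\mathrm A|^2+\Ric(\nu,\nu)\big)\,d\sigma$, so that
\[
-\,t^2V'(t)\;=\;\int_{\{u=c(t)\}}\Big(\frac{|\nabla_\Sigma|\nabla u||^2}{|\nabla u|^2}+|\mathrm A|^2+\Ric(\nu,\nu)\Big)\,d\sigma .
\]

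Next I would assemble $F'(t)=4\pi-3tV(t)+3t^2U(t)-t^2V'(t)$ (using $t^3U'=-tV$), substitute the last display, and rewrite the curvature terms on $\Sigma$ via the Gauss equation $2K_\Sigma=\mathrm R-2\Ric(\nu,\nu)+\mathrm H^2-|\mathrm A|^2$ (with $\mathrm R$ the scalar curvature of $(M,g)$ and $K_\Sigma$ the Gauss curvature of $\Sigma$) and the Gauss--Bonnet theorem $\int_\Sigma K_\Sigma\,d\sigma=2\pi\chi(\Sigma)$. This converts $|\mathrm A|^2+\Ric(\nu,\nu)$ into $\tfrac34\mathrm H^2+\tfrac12|\mathring{\mathrm A}|^2+\tfrac12\mathrm R$, modulo the global term $-2\pi\chi(\Sigma)$, where $\mathring{\mathrm A}$ denotes the trace-free part of $\mathrm A$. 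Meanwhile $-3tV+3t^2U=\int_\Sigma\big(-3t\,\mathrm H|\nabla u|+3t^2|\nabla u|^2\big)\,d\sigma$ combines with $\tfrac34\int_\Sigma\mathrm H^2\,d\sigma$ into the perfect square $\tfrac34\int_\Sigma(\mathrm H-2t|\nabla u|)^2\,d\sigma$. One is left with
\[
F'(t)\;=\;\big(4\pi-2\pi\chi(\Sigma)\big)+\int_{\{u=c(t)\}}\Big(\frac{|\nabla_\Sigma|\nabla u||^2}{|\nabla u|^2}+\frac34\big(\mathrm H-2t|\nabla u|\big)^2+\frac12|\mathring{\mathrm A}|^2+\frac12\mathrm R\Big)\,d\sigma .
\]
Since $\mathrm R\ge 0$ by hypothesis and the remaining integrands are manifestly nonnegative, and since $4\pi-2\pi\chi(\Sigma)\ge 0$ as soon as $\chi(\Sigma)\le 2$, we conclude $F'(t)\ge 0$. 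To pass from here to $F(s)\le F(t)$ for regular values $1-\tfrac1s\le 1-\tfrac1t$ one integrates over $[s,t]$, after checking that $F$ is continuous (indeed locally Lipschitz) on $(0,+\infty)$; equivalently, by the coarea formula, $F(t)-F(s)$ equals the integral of the nonnegative integrand above against $|\nabla u|\,dV$ over $\{1-\tfrac1s<u<1-\tfrac1t\}$ plus the nonnegative quantity $\int_s^t\big(4\pi-2\pi\chi(\{u=c(\tau)\})\big)\,d\tau$.

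The differential identity above is the heart of the proof, but two ingredients carry real content. The first is topological: one must know that the regular level sets $\{u=c\}$ are connected---being two-sided (the normal $\nu$ is globally defined) this forces $\chi(\{u=c\})\le 2$---and it is exactly here that $H_2(M;\Z)=\{0\}$ is used: it prevents $(M,g)$ from having more than one end and guarantees that connected closed surfaces separate $M$, and then the maximum principle shows that the sub- and super-level sets of the harmonic function $u$, and hence their common boundary $\{u=c\}$, are connected. The second is analytic: one needs the level sets to be compact and the critical set $\mathrm{Crit}(u)$ to be negligible for the surface integrals defining $F$, so that $F$ is well defined and continuous on $(0,+\infty)$; this rests on standard properties of the Green's function on a complete nonparabolic manifold and on the fine structure (in particular the finite $\mathcal H^1$-measure) of the critical set of a harmonic function. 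I expect this second, analytic point---rather than the computation---to be the main technical obstacle.
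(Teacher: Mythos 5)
Your differential identity for $F'$ is exactly the one the paper derives in its ``smooth'' proof (your $\tfrac34(\mathrm H-2t|\nabla u|)^2$ is the paper's $\tfrac34\big(2|\nabla u|/(1-u)-\mathrm H\big)^2$, since $t=1/(1-u)$ on the level set), and your connectedness argument via $H_2(M;\Z)=\{0\}$ and the strong maximum principle is also the paper's. The gap is precisely the point you flag at the end and then leave unresolved: the theorem only assumes that the two endpoint values $1-1/s$ and $1-1/t$ are regular, so the slab $\{1-1/s<u<1-1/t\}$ may contain critical points of $u$. Your two proposed ways of upgrading $F'\ge 0$ to $F(s)\le F(t)$ both break down there. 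First, the continuity (let alone local Lipschitzness) of $F$ across critical values of $u$ is a genuinely nontrivial claim that you do not prove; the level sets can change topology and acquire singular points, and the integrands $|\nabla u|\,\mathrm H=-\nabla\nabla u(\nu,\nu)$ and $|\nabla u|^2$ are only controlled off $\mathrm{Crit}(u)$. Second, the ``equivalent'' coarea reformulation is not an equivalence: identifying $F(t)-F(s)$ with the bulk integral of the nonnegative density requires applying the divergence theorem to the vector field
\[
X=\frac{\nabla u}{1-u}+\frac{\nabla|\nabla u|}{(1-u)^2}+\frac{|\nabla u|}{(1-u)^3}\nabla u ,
\]
which is not even continuous across $\mathrm{Crit}(u)$ (the term $\nabla|\nabla u|$ is defined only where $\nabla u\ne 0$), so the divergence theorem cannot be invoked directly on the slab.

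The paper closes this gap with a regularization that your proposal would need to reproduce or replace: it multiplies the singular part of $X$ by a cut-off $\eta_k(|\nabla u|/(1-u))$, obtaining globally smooth fields $X_k$ that agree with $X$ near the two regular boundary level sets, so that the divergence theorem gives $F(t)-F(s)=\int\mathrm{div}(X_k)\,d\mu$ exactly. The crucial structural point is that the extra term produced by $\eta_k'$ is a nonnegative multiple of a square, so it can be discarded, yielding an inequality rather than an identity; the remaining terms are then passed to the limit $k\to\infty$ by dominated convergence (for the Ricci and cross terms, dominated by an $L^1_{\mathrm{loc}}$ majorant involving $|\nabla\nabla u|$) and monotone convergence (for the manifestly nonnegative terms), giving $F(t)-F(s)\ge\int_{\{1-1/s<u<1-1/t\}\setminus\mathrm{Crit}(u)}\mathrm{div}(X)\,d\mu\ge 0$ after coarea, Sard, Gauss--Bonnet and the connectedness argument. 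Without this (or an equivalent device, e.g.\ a proof that $F$ is absolutely continuous across critical values), your argument only proves the theorem on intervals of $t$ free of critical values, which is strictly weaker than the statement.
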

More specifically, we are going to prove that, under the above assumptions, the function $F$ admits a nondecreasing locally absolutely continuous representative, defined in $(0,+\infty)$ and still denoted by $F$, whose $L^1_{loc}$-weak derivative satisfies
\begin{align}\label{F'}
F'(t) \, = \, 4\pi +\!\!\!\!\!\!\!\!\!\!\!\!\!\!\!\int\limits_{\{u=1-\frac{1}{t}\}\setminus\mathrm{Crit}(u)}\!\!\!\!\!\!\!\!\!\bigg[-\frac{\rm{R}^{\Sigma_{t}}}{2} + \frac{\vert\,\nabla^{\Sigma_t}\vert \nabla u\vert\,\vert^{2}}{\vert \nabla u \vert^{2}}+\frac{\Ro}{2}+\frac{\,\vert \mathring{\mathrm{h}}\vert^{2}}{2} +\frac{3}{4}\left(\,\frac{2\vert \nabla u \vert}{1-u} -\mathrm{H}\right)^{\!\!2}\,\bigg] d\sigma
\end{align}
a.e. in $(0,+\infty)$. Here, $\rm{R}^{\Sigma_{t}}$ and $\nabla^{\Sigma_{t}}$ denote the scalar curvature and the Levi-Civita connection of the metric induced by $g$ {on the regular part $\Sigma_t$ of the level set $\{ u= 1- 1/t\}$}. By $\mathrm{h}$ and $\mathrm{H}$, we indicate the second fundamental form and the mean curvature {of $\Sigma_t$}, both computed with respect to the $\infty$--pointing unit normal vector field $\nu={\nabla u}/{\vert \nabla u \vert}$, wheras $\mathring{\mathrm{h}}$ {represents the traceless second fundamental form of $\Sigma_t$.} In particular, as soon as $\Sigma_t$ is a connected regular level set of $u$, one can easily deduce that $F'(t)$ is nonnegative. In fact
$$
4 \pi \,- \! \int\limits_{\Sigma_t} \frac{\rm{R}^{\Sigma_{t}}}{2} \, d\sigma = 4 \pi \, - \! 2\pi \chi(\Sigma_t)  \geq 0 \, 
$$
by the Gauss-Bonnet Theorem, and all the remaining terms in~\eqref{F'} are manifestly nonnegative.

\begin{remark}
It should be noted that, in a recent work~\cite{Mun_Wan}, a new monotonicity formula holding along the level sets of the Green's function was discovered by Munteanu and Wang, working essentially in the same framework as in Theorem~\ref{effectivemonotonicity}. However, both the monotonicity formulas and the geometric conclusions are considerably different from ours.
\end{remark}

Observe that the existence {of the solution $u$ to~\eqref{f0} is equivalent to the existence of the} minimal positive Green's function $\mathcal{G}_{o}$ vanishing at infinity and having a pole at $o$, and it is guaranteed in many reasonable frameworks, such as the one -- particularly relevant to us -- of asymptotically flat manifolds. Indeed, $u$ and $\mathcal{G}_{o}$ are related by the identity $u=1-4\pi\mathcal{G}_{o}$. As it is well known, $u$ is smooth on $M \setminus \{ o\}$ and proper, so that its  level sets are compact. It follows then from~\cite[Theorem~1.7]{Hardt1} that they also have finite $2$-dimensional Hausdorff measure. For the reader's convenience, we also recall that the set $\mathrm{Crit}(u)$ has locally finite $1$-dimensional Hausdorff measure (see for instance~\cite[Theorem~1.1]{Hardt2}), and that the set of the critical values of $u$ has zero Lebesgue measure by Sard's Theorem, whereas the set of regular values of $u$ is open, by the same argument as in~\cite[Theorem~2.3]{Ago_Maz_Oro_1}. Building on the previous observations, it is not hard to realise that the function $F$ given in~\eqref{eq0} is well defined, as its summands come from the integration
of bounded functions on sets with finite measure. To justify this latter sentence, one only needs to check that $|\, |\nabla u|  \, \mathrm{H} \, |$ is bounded on $\{u=\tau\}\setminus\mathrm{Crit}(u)$, for every $\tau \in (-\infty , 1)$. Since $u$ is harmonic, $\mathrm{H}$ can be expressed as 
\begin{equation}\label{H}
\mathrm{H} \, = \, - \, \frac{\nabla \nabla u (\nabla u,\nabla u)}{\vert \nabla u\vert^{3}}\,=\,-\,\frac{\langle \nabla \vert \nabla u\vert,\!\nabla u\rangle}{ \vert \nabla u\vert^{2}},
\end{equation}
away from $\mathrm{Crit}(u)$, where angle brackets $\langle \cdot\,,\cdot\rangle$ denote the scalar product, taken with respect to the metric $g$. Consequently, one has that
\begin{equation}
\label{eq:hdubound}
\big\vert \,\vert \nabla u \vert \mathrm{H}\,\big\vert\leq \vert\nabla \nabla u (\nu,\nu) \vert\leq \vert \nabla \nabla u \vert  \, ,\quad 
\end{equation}
whenever $\vert \nabla u\vert \neq 0$. We are now ready to prove Theorem~\ref{effectivemonotonicity}. For the sake of exposition, we first give a proof under the favourable assumption that $\mathrm{Crit}(u) = \emptyset$, and then we proceed with the proof of the full statement.

\begin{proof}[Proof in the absence of critical points.]
In this case, all of the level sets of $u$ are regular and diffeomorphic to the 2-sphere, and in turn the function $F$ is everywhere continuously differentiable in its domain of definition. We claim that $F'(t) \geq 0$ for every $t \in (0, + \infty)$. We start observing that
\begin{align}
\frac{d}{dt} \!\!\!\! \int\limits_{\{u=1-\frac{1}{t}\}} \!\!\!\!\vert \nabla u \vert^{2} \, d\sigma \, &  = \, -  \frac{1}{t^{2}} \!\!\!\! \int\limits_{\{u=1-\frac{1}{t}\}} \!\!\!\! \vert \nabla u \vert\,\mathrm{H}\,  d\sigma\, ,\nonumber \\
\frac{d}{dt}\!\!\!\! \int\limits_{\{u=1-\frac{1}{t}\}} \!\!\!\!\vert \nabla u \vert\, \mathrm{H} \, d\sigma & \, = \, -\frac{1}{t^{2}}\!\!\!\! \int\limits_{\{u=1-\frac{1}{t}\}}  \!\!\!\! \vert \nabla u \vert  \left[ \, \Delta_{\Sigma_{t}} \!\left(\frac{1}{\vert \nabla u \vert}\right) + \, \frac{\vert \mathrm{h}\vert^{2}+\Ric (\nu,\nu)}{\vert \nabla u\vert} \, \right]d\sigma\,,\label{eq20}
\end{align}
where $\Delta_{\Sigma_{t}}$ is the Laplace-Beltrami operator of the metric induced {on $\Sigma_{t}=\{u=1-\frac{1}{t}\}$.} 
With the help of the Gauss equation, the integrand on the right hand side of~\eqref{eq20} can be expressed as 
\begin{align*}
\vert \nabla u \vert & \left[ \, \Delta_{\Sigma_{t}} \!\left(\frac{1}{\vert \nabla u \vert}\right) + \, \frac{\vert \mathrm{h}\vert^{2}+\Ric (\nu,\nu)}{\vert \nabla u\vert} \, \right]
= &\\
 & \qquad \qquad \qquad \qquad = -\, \Delta_{\Sigma_{t}}(\log \vert \nabla u \vert ) +\frac{\vert\,\nabla^{\Sigma_t}\vert \nabla u\vert\,\vert^{2}}{\vert \nabla u \vert^{2}} +\frac{\Ro}{2} -\!\frac{\,\rm{R}^{\Sigma_{t}}}{2} {+ \frac{\,\vert \mathring{\mathrm{h}}\vert^{2}}{2}   +\frac{3}{4}\mathrm{H}^{2}.}\nonumber
\end{align*}
Substituting the latter expression into~\eqref{eq20} and using standard manipulations, one easily arrives at 
\begin{equation}
\label{monoliscia}
F'(t) \, = \, 4\pi - \int\limits_{\Sigma_t}\!\frac{\,\rm{R}^{\Sigma_{t}}}{2} \, d\sigma  \, + \,  \int\limits_{\Sigma_t}\left[ \, \frac{\vert\,\nabla^{\Sigma_t}\vert \nabla u\vert\,\vert^{2}}{\vert \nabla u \vert^{2}}+\frac{\Ro}{2}+\frac{\,\vert \mathring{\mathrm{h}}\vert^{2}}{2}\, +\frac{3}{4}\left(\,\frac{2\vert \nabla u \vert}{1-u} -\mathrm{H}\right)^{2}\,\right] d\sigma.
\end{equation}
Now, we notice that the last summand of the right hand side is always nonnegative, as the scalar curvature of $(M,g)$ is nonnegative by assumption. The first two summands also give a nonnegative contribution, by virtue of Gauss-Bonnet Theorem combined with the observation that $\Sigma_t$ is closed and connected for all $t \in (0,+\infty)$. It is worth pointing out that when critical points are not present, the connectedness of the level sets of $u$ follows by rather elementary considerations, without any further assumption on the topology of $(M,g)$. In fact, on one hand, the asymptotic behaviour of $u$ near the pole implies that in this region at least one level set of $u$ is necessarily diffeomorphic to a 2-sphere. On the other hand, the same must be true for every level set of $u$, since $\na u$ never vanishes. 
\end{proof}

{\begin{proof}
Let us consider the vector field $Y$, defined as 
\begin{align}\label{Y}
Y:=\frac{\nabla \vert \nabla u\vert}{(1-u)^{2}} + \frac{\vert \nabla u \vert}{(1-u)^{3}}\nabla u\,,
\end{align}
where $u$ is the solution to problem~\eqref{f0}. The vector field $Y$ is well defined and smooth on the open set $M_{o}\setminus \mathrm{Crit}(u)$, where $M_{o}$ is defined as
$$M_{o}:=M\setminus\{o\} \, .
$$ 
With the help of the Bochner formula, the divergence of $Y$ on $M_{o}\setminus \mathrm{Crit}(u)$ can be expressed as 
\begin{align*}
\mathrm{div}(Y)=\frac{\vert \nabla u \vert}{(1-u)^{2}} \left[\frac{3\vert \nabla u \vert^{2}}{(1-u)^{2}}+\frac{3 \langle\nabla \vert \nabla u\vert,\! \nabla u  \rangle}{ (1-u) \, \vert \nabla u\vert}
+\frac{
{\vert \nabla \nabla u\vert^{2}}
-
{\vert\,\nabla\vert \nabla u\vert\,\vert^{2}}
+
{\Ric(\na u,\na u)}
}
{|\na u|^2}\right],
\end{align*}
where in the computation we took advantage of the fact that $u$ is harmonic. Using the Gauss equation in combination with the standard identity $\vert \nabla \nabla u\vert^{2}=\vert \nabla u\vert^{2} \vert \rm{h}\vert^{2}+\vert\,\nabla\vert \nabla u\vert\,\vert^{2}+\vert\,\nabla^{\Sigma_t}\vert \nabla u\vert\,\vert^{2}$, one can work out an equivalent expression for 
$\mathrm{div}(Y)$, adapted to {(the regular portion of) the level sets of $u$}, namely 
\begin{align}\label{div(Y)geom}
\mathrm{div}(Y)=\frac{\vert \nabla u \vert}{(1-u)^{2}}\,\left[ -\frac{\,\rm{R}^{\Sigma}}{2}+\frac{\vert\,\nabla^{\Sigma}\vert \nabla u\vert\,\vert^{2}}{\vert \nabla u \vert^{2}}+\frac{\Ro}{2}+\frac{\, \vert \mathring{\mathrm{h}}\vert^{2}}{2} +\frac{3}{4}\left( \frac{2\vert \nabla u \vert}{1-u} -\mathrm{H}\right)^{\!\!2} \, \right]\,.
\end{align}
Here, $\mathrm{h}, \mathrm{H}, \rm{R}^{\Sigma}$, and $\na^\Sigma$ are all referred {to the regular portion of} the level set of $u$ that passes through the point where $\mathrm{div}(Y)$ is computed. Setting 
\begin{equation*}
\Phi(t) \, := \, F(t) - 4\pi t \, = \,-\,\,\, t^{2} \!\!\!\!\!\!\!\!\!\!\!\!\int\limits_{\{u=1-\frac{1}{t}\}\setminus\mathrm{Crit}(u)}\!\!\!\!\!\!\!\!\!\!\!\!  \vert \nabla u \vert\, \mathrm{H} \,\, d\sigma \, \, + \,\, t^{3} \!\!\!\!\!\!\!\!\!\!\!\! \int\limits_{\{u=1-\frac{1}{t}\}\setminus\mathrm{Crit}(u)}\!\!\!\!\!\!\!\!\!\!\!\! \vert \nabla u \vert^{2} \,\, d\sigma  \,\, =  \!\!\!\!\!\!
 \!\!\!\!\!\!\int\limits_{\{u=1-\frac{1}{t}\}\setminus\mathrm{Crit}(u)}\!\!\!\!\!\!\!\!\!\!\!\!  \left\langle Y ,  \frac{\nabla u}{|\nabla u|} \right\rangle \, d\sigma 
\,, \end{equation*}
for every $t \in (0, +\infty)$, and using the Divergence Theorem, it is not hard to check that
\begin{align*}
\Phi(t)-\Phi(s) \,\, & = \!\!\!\!\!\!\!\!\!\!\int\limits_{\{1-\frac{1}{s}< u<1-\frac{1}{t}\}} \!\!\!\!\!\!\!\!\!\! \mathrm{div}(Y) \, d\mu \,\, = \\
&= \, \int\limits_{s}^{t} d\tau \!\!\!\int\limits_{\{u=1-\frac{1}{\tau}\} }\!\!\left[-\,\frac{\rm{R}^{\Sigma_{\tau}}}{2} + \frac{\vert\,\nabla^{\Sigma_\tau}\vert \nabla u\vert\,\vert^{2}}{\vert \nabla u \vert^{2}}+\frac{\Ro}{2}+\frac{\,\vert \mathring{\mathrm{h}}\vert^{2}}{2}\, +\frac{3}{4}\left(\,\frac{2\vert \nabla u \vert}{1-u} -\mathrm{H}\right)^{2}\,\right] d\sigma
\nonumber \, ,
\end{align*}
provided no critical values of $u$ are contained in the interval $[1-1/s, 1-1/t]$. {We aim at  proving that a similar statement holds true also in the presence of critical values, showing $\Phi\in W^{1,1}_{loc}(0,+\infty)$.} As a first step in this direction, we claim that 
\begin{equation}
\label{eq:claim}
\mathrm{div}(Y) \, \mathbb{I}_{M_{o}\setminus\mathrm{Crit}(u)}\in L^{1}_{loc}(M_o) \, ,
\end{equation}
where $\mathbb{I}_{M_{o}\setminus\mathrm{Crit}(u)}$ denotes the characteristic function of $M_{o}\setminus\mathrm{Crit}(u)$.
Let us observe that if $K$ is a compact subset of $M_o$, then, by Sard's Theorem, $K$ is contained in the set 
$$
E_{s}^{t}:=\left\{1-\frac{1}{s}<u<1-\frac{1}{t}\right\} \, ,
$$ 
for some $0<s<t$, such that both $1- 1/s$ and $1 - 1/t$ are regular values of $u$. As we are dealing with the case where critical values of $u$ are present in the interval $(1-1/s , 1-1/t)$, we have that necessarily the open subset $\{ 1-1/s < u < 1 - 1/t\}$ must contain critical points of $u$, so that the vector field $Y$ is no longer smooth and well defined everywhere. To overcome this issue, we 
consider a sequence of cut-off functions $\{ \eta_k\}_{k \in \mathbb{N}^+}$, where, for every $k \in \mathbb{N}^+$, the  function $\eta_k:[0,+\infty) \to [0,1]$ is smooth, nondecreasing, and obeying the following 
structural conditions:
\begin{align}
\eta_k (\tau) \equiv 0\quad \text{in $\left[0 \,,\frac{1}{2k}\,\right]$}\,,\qquad 
0\leq \eta_k'(\tau)\leq 2 k\quad \text{in $\left[\,\frac{1}{2k}\,,\frac{3}{2k}\,\right]$},
\qquad
\eta_k(\tau) \equiv 1\quad\text{in $\left[\,\frac{3}{2k} \, ,+\infty\!\right)$}\,.\,\nonumber
\end{align}
Using these cut-off functions, we define for every $k \in\mathbb{N}^+$ the vector field
\begin{equation}
Y_{k}\, := \, \, \eta_{{k}}\Big(\frac{\vert \nabla u\vert}{1-u}\Big)Y \, .\,\nonumber
\end{equation}
It is immediate to observe that for every $k \in\mathbb{N}^+$, $Y_k$ is a smooth vector field defined in $M_{o}$. Moreover, on any given compact subset of $M_{o}\setminus \mathrm{Crit}(u)$, the vector field $Y_k$ coincides with the vector field $Y$, provided $k$ is large enough.
For any such $Y_k$, the divergence is readily computed as follows
\begin{align}
\mathrm{div}(Y_{k})
& \, = \, \frac{\vert \nabla u \vert}{(1-u)^{2}} \left\{ \eta_{{k}}\Big(\frac{\vert \nabla u\vert}{1-u}\Big) \left[\frac{3\vert \nabla u \vert^{2}}{(1-u)^{2}}+\frac{\vert \nabla \nabla u\vert^{2}-\vert\,\nabla\vert \nabla u\vert\,\vert^{2}}{\vert \nabla u\vert^{2}}\right]    \right\}\nonumber\\
& \, + \,  \frac{\vert \nabla u \vert}{(1-u)^{2}}\left\{  \eta_{{k}}\Big(\frac{\vert \nabla u\vert}{1-u}\Big)\left[ \frac{ 3\langle\nabla \vert \nabla u\vert,\! \nabla u \rangle}{ (1-u) \vert \nabla u\vert}+ \frac{\Ric(\na u,\na u)}{|\na u|^2}\right] \,\,  \right\}\nonumber\\
& \, + \, \frac{|\na u|^2}{(1-u)^{3}}\,\, \eta'_{{k}}\Big(\frac{\vert \nabla u\vert}{1-u}\Big)\left\vert\,\frac{ \nabla u}{1-u}+ \frac{\nabla\vert \nabla u\vert}{|\na u|}\,\right\vert^{2}.\label{eq2}
\end{align}
An important feature of the above expression is that the last summand is always nonnegative. Taking into account these simple considerations and applying the  Divergence Theorem, one gets
\begin{align}
\label{tfci}
&\Phi(t)-\Phi(s) \,\,  = \!\!\!\!\!\!\!\!\!\!\int\limits_{\{1-\frac{1}{s}< u<1-\frac{1}{t}\}} \!\!\!\!\!\!\!\!\!\! \mathrm{div}(Y_k) \, d\mu \,\,\, \geq \!\!\!\!\!\!\!\!\! \int\limits_{\{1-\frac{1}{s}< u<1-\frac{1}{t}\}} \!\!\!\!\!\!\!\!\!\!\! P_k \, d\mu \,\,\,\,\, + \!\!\!\!\!\!\!\!\int\limits_{\{1-\frac{1}{s}< u<1-\frac{1}{t}\}}\!\!\!\!\!\!\!\!\!\! D_k \, d\mu\,,
\end{align}
where we set 
\begin{align}
P_k & \, := \, \eta_{{k}}\Big(\frac{\vert \nabla u\vert}{1-u}\Big)\,\,P\,,\quad \text{with}\quad P\,:=\,\frac{\vert \nabla u \vert}{(1-u)^{2}} \left[\frac{3\vert \nabla u \vert^{2}}{(1-u)^{2}}+\frac{\vert \nabla \nabla u\vert^{2}-\vert\,\nabla\vert \nabla u\vert\,\vert^{2}}{\vert \nabla u\vert^{2}}\right] \,,\nonumber\\
D_k & \, :=\,  \eta_{{k}}\Big(\frac{\vert \nabla u\vert}{1-u}\Big)\,\,D\,,\,\quad \text{with}\quad D\,:=\,\frac{\vert \nabla u \vert}{(1-u)^{2}}\left[ \frac{ 3\langle\nabla \vert \nabla u\vert,\! \nabla u \rangle}{ (1-u) \vert \nabla u\vert}+ \frac{\Ric(\na u,\na u)}{|\na u|^2}\right] \, .\nonumber
\end{align}
Concerning the functions $D_k$, we have that
$$
\vert D_k \vert
\,\, \leq \,\,
\frac{\vert \nabla u \vert}{(1-u)^{2}} \, \left[\, \frac{3\,\vert\, \nabla \na u\vert}{1-u}+\vert\Ric\vert \, \right]\, \mathbb{I}_{M_o\setminus\mathrm{Crit}(u)}\,.
$$
Since the function on the right hand side belongs to $L^{1}_{loc}(M_o)$, Lebesgue's Dominated Convergence Theorem implies that $D\, \mathbb{I}_{M_{o}\setminus\mathrm{Crit}(u)}\in L^{1}(E_{s}^{t})$ and 
that 
\begin{equation*}
\lim_{k\to + \infty} \!\!\!\!\!\! \int\limits_{\{1-\frac{1}{s}< u<1-\frac{1}{t}\}}\!\!\!\!\!\!\!\!\!\! D_k \, d\mu\,\,\,\,\,\, =  \!\!\!\!\!\!\!\!\!\!\!\!\int\limits_{\{1-\frac{1}{s}< u<1-\frac{1}{t}\} \setminus \mathrm{Crit}(u)} \!\!\!\!\!\!\!\!\!\!\!\!\!\!\!\! D \, 
d \mu\, < +\infty \,.
\end{equation*}
This fact, combined with inequality~\eqref{tfci}, implies that the sequence of the integrals of the functions $P_k$ is uniformly bounded in $k$. On the other hand, the $P_k$'s are clearly nonnegative and they converge monotonically and pointwise 
to the function $P\,\,\mathbb{I}_{M_{o}\setminus\mathrm{Crit}(u)}$.
The Monotone Convergence Theorem thus yields 
\begin{equation*}
\lim_{k\to + \infty} \!\!\!\!\!\! \int\limits_{\{1-\frac{1}{s}< u<1-\frac{1}{t}\}}\!\!\!\!\!\!\!\!\!\! P_k \, d\mu\,\,\,\,\,\, =  \!\!\!\!\!\!\!\!\!\!\!\!\int\limits_{\{1-\frac{1}{s}< u<1-\frac{1}{t}\} \setminus \mathrm{Crit}(u)}\!\!\!\!\!\!\!\!\!\!\!\!\!\!\! P\, d \mu  \, < +\infty\,.
\end{equation*}
In particular, we have that $P\, \mathbb{I}_{M_{o}\setminus\mathrm{Crit}(u)}\in L^{1}(E_{s}^{t})$.
Since $\mathrm{div}(Y)\mathbb{I}_{M_{o}\setminus\mathrm{Crit}(u)}=(P+D)\mathbb{I}_{M_{o}\setminus\mathrm{Crit}(u)}$, it follows {then that $\mathrm{div}(Y)\mathbb{I}_{M_{o}\setminus\mathrm{Crit}(u)}\in L^{1}_{loc}(M_o)$}, as desired. 

Having the claim~\eqref{eq:claim} at hand, we are now going to prove that $\Phi \in W^{1,1}_{loc}(0,+\infty)$ with weak derivative given by
\begin{align}\label{Phi'}
\Phi'(t)&\,=\!\!\!\!\!\!\int\limits_{\{u=1-\frac{1}{t}\}\setminus\mathrm{Crit}(u)}\!\!\left[-\,\frac{\rm{R}^{\Sigma_{t}}}{2} \, + \, \frac{\vert\,\nabla^{\Sigma_t}\vert \nabla u\vert\,\vert^{2}}{\vert \nabla u \vert^{2}}+\frac{\Ro}{2}+\frac{\,\vert \mathring{\mathrm{h}}\vert^{2}}{2}\, +\frac{3}{4}\left(\,\frac{2\vert \nabla u \vert}{1-u} -\mathrm{H}\right)^{2}\,\right] d\sigma
\end{align}
a.e. in $(0,+\infty)$. The latter is in $L^1_{loc}(0,+\infty)$, thanks precisely to claim~\eqref{eq:claim} coupled with the elementary properties of the integrals and the Coarea Formula. 
The following argument is inspired by~\cite{Ben_Fog_Maz_1}. Let $\chi$ be a test function belonging to $C_{c}^{\infty}(0,+\infty)$. We have that
\begin{align}
\int\limits_{0}^{+\infty}\!\!\chi'(\tau)\,\Phi(\tau)\,d\tau&\,=\,\int\limits_{0}^{+\infty}d\tau\!\!\!\!\!\!\int\limits_{\big\{\frac{1}{1-u}=\tau\big\}\setminus\mathrm{Crit}(u)}\!\!\! \!\!\!\!\!\!\!\!\!\chi'\Big( \frac{1}{1-u} \Big)\,\frac{\langle Y,\nabla u\rangle}{\vert \nabla u \vert}\,d\sigma\,=\!\!\!\int\limits_{M_{o}\setminus\mathrm{Crit}(u)}\!\!\!\Big\langle Y,\nabla \Big[\chi\,\Big( \frac{1}{1-u} \Big)\Big] \Big\rangle\,d\mu\nonumber\\
&\,=\,\lim_{k\to +\infty}\int\limits_{M_{o}}\Big\langle Y_{k},\nabla \Big[\chi\,\Big( \frac{1}{1-u} \Big)\Big] \Big\rangle\,d\mu\,=\,-\,\lim_{k\to +\infty}\int\limits_{M_{o}}\chi\,\Big( \frac{1}{1-u} \Big)\,\mathrm{div}( Y_{k})\,d\mu\nonumber\,,
\end{align}
where the second equality follows by the Coarea Formula, the third one by Lebesgue's Dominated Convergence Theorem, whereas the last one is a simple integration by parts. We now let $0<s<t<+ \infty$ be such that $\mathrm{supp}\chi \subset (s,t)$. Setting 
\begin{equation}
Q\,:=\, \frac{|\na u|}{(1-u)^{2}}\,\left\vert\,\frac{ \nabla u}{1-u}+ \frac{\nabla\vert \nabla u\vert}{|\na u|}\,\right\vert^{2}\, \nonumber
\end{equation}
on $M_{o}\setminus\mathrm{Crit}(u)$ and using formula~\eqref{eq2}, one gets 
\begin{align}
\int\limits_{M_{o}}\!\chi\,\Big( \frac{1}{1-u} \Big)\,\mathrm{div}( Y_{k})\,d\mu&\,=\int\limits_{E_{s}^{t}}\!\chi\,\Big( \frac{1}{1-u} \Big)\,\Big\{P_{k}\,+\,D_{k}\,+\, \frac{|\na u|}{1-u}\,\, \eta'_{{k}}\Big(\frac{\vert \nabla u\vert}{1-u}\Big)\,Q\Big\}\,d\mu\,.\nonumber
\end{align}
Using the elementary inequality~\eqref{eq:hdubound}, it is not hard to show that 
$$
Q\,\mathbb{I}_{M_{o}\setminus\mathrm{Crit}(u)} \, \leq \, {3 P \,\mathbb{I}_{M_{o}}\setminus\mathrm{Crit}(u)} + 2 \frac{|\na u||\na\na u|}{(1-u)^3}\, ,
$$
so that $Q\,\mathbb{I}_{M_{o}\setminus\mathrm{Crit}(u)} \in L^1_{loc}(M_o)${, whereas $|\na u|/(1-u)\,\, \eta'_{{k}}\big(\vert \nabla u\vert/(1-u)\big)$ is always bounded}. 
As $\lim_{k \to + \infty}\eta_k'(\tau)=0$ for every $\tau \in (0, + \infty)$, the Dominated Convergence Theorem implies that 
$$
\lim_{k\to +\infty}\int\limits_{E_{s}^{t}}\!\!\chi\,\Big( \frac{1}{1-u} \Big)\,\frac{|\na u|}{(1-u)}\,\, \eta'_{{k}}\Big(\frac{\vert \nabla u\vert}{1-u}\Big)\,Q\,d\mu\,=\,0.
$$
All in all we get
\begin{align}
&\int\limits_{0}^{+\infty}\!\!\!\chi'(s)\,\Phi(s)\,ds\,=\,-\,\lim_{k\to +\infty}\int\limits_{M_{o}}\chi\,\Big( \frac{1}{1-u} \Big)\,\mathrm{div}( Y_{k})\,d\mu=-\!\!\!\!\!\int\limits_{M_{o}\setminus \mathrm{Crit}(u)}\!\!\!\!\!\!\chi\,\Big( \frac{1}{1-u} \Big)\,\mathrm{div}( Y)\,d\mu\,\nonumber\\
&\,=-\!\!\int\limits_{0}^{+\infty}\!\!\chi(s) \,\,  ds\!\!\! \!\!\!\!\!\!\!\int\limits_{\{u=1-\frac{1}{s}\}\setminus\mathrm{Crit}(u)}\!\!\left[\!-\frac{\,\rm{R}^{\Sigma_{s}}}{2}+\frac{\vert\,\nabla^{\Sigma_s}\vert \nabla u\vert\,\vert^{2}}{\vert \nabla u \vert^{2}}+\frac{\Ro}{2}+\frac{\,\vert \mathring{\mathrm{h}}\vert^{2}}{2}\, +\frac{3}{4}\left(\,\frac{2\vert \nabla u \vert}{1-u} -\mathrm{H}\right)^{2}\right]d\sigma,\nonumber
\end{align}
where in the last identity we used~\eqref{div(Y)geom} together with the Coarea Formula. It is now clear that $\Phi\in W^{1,1}_{loc}(0,+\infty)$. As $F(t)=4\pi t+\Phi(t)$, it is also clear that $F$ belongs to $W^{1,1}_{loc}(0,+\infty)$ and that its weak derivative coincides a.e. with the expression in~\eqref{F'}. In particular, the function $F$ admits a (locally) absolutely continuous representative -- still denoted by $F$ -- satisfying the following identity, for any pair of positive real numbers $s < t$
\begin{align}
&F(t)-F(s) \,\, = \nonumber\\
&=\!\int\limits_{s}^{t} d\tau \!\left\{  4\pi +\!\!\!\!\!\!\int\limits_{\{u=1-\frac{1}{\tau}\}\setminus\mathrm{Crit}(u)}\!\!\!\left[-\,\frac{\rm{R}^{\Sigma_{\tau}}}{2} + \frac{\vert\,\nabla^{\Sigma_\tau}\vert \nabla u\vert\,\vert^{2}}{\vert \nabla u \vert^{2}}+\frac{\Ro}{2}+\frac{\,\vert \mathring{\mathrm{h}}\vert^{2}}{2}\, +\frac{3}{4}\left(\,\frac{2\vert \nabla u \vert}{1-u} -\mathrm{H}\right)^{2}\,\right] d\sigma\right\}\nonumber\\
  &=\!\!\!\!\!\int\limits_{\left[s , t \right] \setminus \, \mathcal{N}} \!\!\! d\tau \! \left\{  4\pi -\!\!\!\!\!\int\limits_{\{u=1-\frac{1}{\tau}\}}\!\!\!\!\!\frac{\,\,\rm{R}^{\Sigma_{\tau}}}{2} \, d\sigma +\!\!\!\!\!\!\int\limits_{\{u=1-\frac{1}{\tau}\} }\!\!\left[ \frac{\vert\,\nabla^{\Sigma_{\tau}}\vert \nabla u\vert\,\vert^{2}}{\vert \nabla u \vert^{2}}+\frac{\Ro}{2}+\frac{1}{2} \vert \mathring{\mathrm{h}}\vert^{2}+\frac{3}{4}\left( \frac{2\vert \nabla u \vert}{1-u} -\mathrm{H}\right)^{\!\!2}\right]
  d\sigma \! \right\}, 
\end{align}
where $\mathcal{N}$ is the set of the critical values of $u$ and it is negligible, by Sard's Theorem. Since all the level sets corresponding to regular values of $u$ are closed regular surfaces, the monotonicity of $F$ follows by the very same considerations made after formula~\eqref{monoliscia}, with just one important exception. Indeed, in the present context, we need a different argument to ensure that the regular level sets of $u$ are connected, so that, by the Gauss-Bonnet Theorem, one obtains the inequality $4\pi - 2\pi \chi(\{ u=1-\frac{1}{\tau} \}) \geq 0$, for every $\tau \in (0,+\infty)\setminus \mathcal{N}$. Here is where the assumption $H_2(M;\Z)=\{0\}$ comes into play. 
To see this, suppose by contradiction that for some $\tau \in (0,+\infty)\setminus\mathcal{N}$ the (regular) level set $\Sigma = \{ u=1-\frac{1}{\tau}\}$ is given by the disjoint union of at least two connected components $\Sigma'$ and $\Sigma''$, so that $\Sigma'$ and $\Sigma''$ are two closed hypersurfaces. By the triviality of $H_2(M;\Z)$, we have that each closed 2-dimensional surface in $(M,g)$ is the boundary of a 3-dimensional bounded open domain. In particular, there exist two bounded connected and open subsets $\Om', \Om'' \subset M$ such that $\partial \Om' = \Sigma'$ and $\partial \Om'' = \Sigma''$. If $o$ doesn't belong to $\Om'$, then $\overline{\Om'}$ is contained in $M_{o}$, and  by the Strong Maximum Principle $u$ must then be constant in $\overline{\Om'}$. But this is impossible, as all the level sets of $u$ have finite $2$-dimensional Hausdorff measure, since $u$ is harmonic. The same argument obviously applies to $\Om''$; therefore, the pole $o$ must belong to both $\Om'$ and $\Om''$, so that these latter turn out to have a nonempty intersection. Consequently, either $\Om'\subset \Om''$ or $\Om''\subset \Om'$, since $\partial \Om' = \Sigma'$ and $\partial \Om'' = \Sigma''$ are disjoint. On the other hand, by the Strong Maximum Principle $u$ must then be constant either in $\overline{\Om''}\setminus \Om'$ or in $\overline{\Om'}\setminus \Om''$ respectively, which is again a contradiction.
\end{proof}
Combining the above theorem with some standard facts about the asymptotic behaviour of the Green's functions near the pole, one gets the following corollary, that should be regarded as an abstract Positive Mass Theorem, holding in a more general framework, where the total mass is replaced by the quantity $\lim_{t \to + \infty} F(t)$, or more likely, {by $\sup_{o \in M} \lim_{t \to + \infty} F_o(t) $, where the index $o$} here is reminiscent of the pole of the Green's function that we are considering. 
\begin{corollary}
\label{positiveAOM}
Under the assumptions of Theorem~\ref{effectivemonotonicity}, we have that 
\begin{equation}
\label{eq:PMTAOM}
0 \, \leq \, \lim_{t \to + \infty} F(t) \, .
\end{equation}
Moreover, if $\lim_{t \to + \infty} F(t) = 0$, then $(M,g)$ is isometric to $(\R^3, g_{\R^3})$.
\end{corollary}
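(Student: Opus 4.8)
\textbf{Proof proposal for Corollary~\ref{positiveAOM}.}

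The plan is to extract both conclusions from Theorem~\ref{effectivemonotonicity} by analysing the behaviour of $F(t)$ at the two ends of its domain, $t \to 0^+$ and $t \to +\infty$. First I would show that $\liminf_{t \to 0^+} F(t) \leq 0$; combined with the monotonicity this forces $F(t) \geq 0$ for every regular value, hence $\limsup_{t \to +\infty} F(t) \geq \liminf_{t \to 0^+} F(t)$ is not quite the inequality I want, so more care is needed. The correct route is to compute $\lim_{t \to 0^+} F(t) = 0$ exactly, using the asymptotics of the Green's function near its pole $o$. Near $o$, in geodesic normal coordinates, $\mathcal{G}_o \sim \frac{1}{4\pi \,|x|}$, so $u = 1 - 4\pi \mathcal{G}_o \sim 1 - \frac{1}{|x|}$, which means the level set $\{u = 1 - 1/t\}$ is, to leading order, the geodesic sphere of radius $t$ around $o$ as $t \to 0^+$. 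On such a sphere $|\nabla u| \sim 1/t^2$, $\mathrm{H} \sim 2/t$, and $d\sigma \sim t^2\, d\sigma_{\SSS^2}$, so the term $4\pi t \to 0$, the term $t^2 \int |\nabla u|\,\mathrm{H}\, d\sigma \sim t^2 \cdot (1/t^2)(2/t) \cdot t^2 \cdot 4\pi = 8\pi t \to 0$, and the term $t^3 \int |\nabla u|^2 d\sigma \sim t^3 \cdot (1/t^4) \cdot t^2 \cdot 4\pi = 4\pi t \to 0$. Hence $\lim_{t \to 0^+} F(t) = 0$ along regular values. Since $F$ is monotone nondecreasing on regular values by Theorem~\ref{effectivemonotonicity}, it follows that $F(t) \geq 0$ for every $t$ with $1 - 1/t$ regular, and therefore $\limsup_{t \to +\infty} F(t) \geq 0$, which is~\eqref{eq:PMTAOM}.

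For the rigidity statement, I would argue by contradiction in a way that exploits the monotonicity more sharply. Suppose $\limsup_{t \to +\infty} F(t) = 0$. Combined with $\lim_{t \to 0^+} F(t) = 0$ and the fact that $F$ is nondecreasing, this forces $F$ to be \emph{constant} equal to $0$ along all regular values. Now I would revisit the divergence identity: from~\eqref{div(X)geom} and the computation in the proof of Theorem~\ref{effectivemonotonicity}, $F(t) - F(s) = \int_{\{1-1/s < u < 1-1/t\}} \mathrm{div}(X)\, d\mu$ (at least between consecutive regular values, and in general with the $\geq$ from the cut-off argument, which must actually be an equality here since both sides vanish). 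Because the integrand $\mathrm{div}(X)$ is, after using~\eqref{div(X)geom} and the Gauss--Bonnet argument, a sum of manifestly nonnegative terms plus the topological term $4\pi - 2\pi\chi(\Sigma_\tau) \geq 0$, the vanishing of all the $F(t)-F(s)$ forces each of these terms to vanish identically on $M \setminus (\{o\} \cup \mathrm{Crit}(u))$. In particular: (i) $\mathrm{R} \equiv 0$; (ii) $\mathring{\mathrm{h}} \equiv 0$ on every regular level set, so the level sets are totally umbilic; (iii) $\nabla^{\Sigma}|\nabla u| \equiv 0$, i.e. $|\nabla u|$ is constant on each connected regular level set; (iv) $\mathrm{H} = \frac{2|\nabla u|}{1-u}$ on each regular level set; and (v) each regular level set is a round sphere ($\chi = 2$ and, with $\mathring{\mathrm{h}} = 0$ and $\mathrm{R}^\Sigma$ constant by Gauss--Bonnet plus (iii), it is a constant-curvature $2$-sphere).

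From this rigid package I would reconstruct the metric. The conditions that $|\nabla u|$ is constant on level sets and $\mathrm{h} = \frac{|\nabla u|}{1-u}\, g_\Sigma$ mean that, writing $g = |\nabla u|^{-2} du^2 + g_{\Sigma_\tau}$ in a neighbourhood of the (connected) regular part, the function $|\nabla u|$ depends only on $u$ and the evolution of the induced metric along the flow is a pure conformal dilation; integrating the ODE for the area and using (iv) one recovers exactly the relation $|\nabla u| = (1-u)^2 / C$ and a metric cone-like structure. Matching with the pole asymptotics $u \sim 1 - 1/|x|$ (which fixes the constant and shows $|\nabla u| = |\nabla u|^2$-type scaling is precisely the Euclidean one) and with $\mathrm{R} \equiv 0$, one identifies $(M \setminus \{o\}, g)$ with $(\R^3 \setminus \{0\}, g_{\R^3})$ via the map $x \mapsto x/|x|^2$ (so that $u(x) = 1 - 1/|x|$ pulls back to the Euclidean Green's function), and the singularity at $o$ extends smoothly, giving the isometry with $(\R^3, g_{\R^3})$. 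The main obstacle is step~(v) into the metric reconstruction: handling the possibility that $\mathrm{Crit}(u) \neq \emptyset$ — one must first argue that in the rigid case critical points are in fact absent (e.g. because a critical point would force a level set to be singular or disconnected, contradicting the round-sphere conclusion on nearby regular levels together with the flow being a conformal dilation), and only then is the global ODE integration and the explicit identification with Euclidean space legitimate; alternatively one can note that the vanishing of all terms already forbids the topological defect that critical points would create.
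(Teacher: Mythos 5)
Your treatment of the inequality and the overall strategy for rigidity coincide with the paper's: you compute $\lim_{t\to 0^+}F(t)=0$ from the pole asymptotics of $\mathcal{G}_o$ and invoke monotonicity (the paper does the limit slightly more carefully, using the expansions of~\cite{MRS} to get two-sided bounds on $1-u$ and $\vert\nabla u\vert$ and an upper bound on $\vert\nabla\nabla u\vert$ in powers of $r$, and then estimating each term of $F$ against $\int_{\{u=1-1/t\}}\vert\nabla u\vert\,d\sigma=4\pi$), and for rigidity you extract the vanishing of every nonnegative term, the relation $\vert\nabla u\vert=f(u)$ from $\nabla^{\Sigma}\vert\nabla u\vert\equiv 0$, the ODE $f'=-2f/(1-u)$ from combining $\mathrm{H}=2f(u)/(1-u)$ with $\mathrm{H}=-f'(u)$, the solution $f(u)=(1-u)^2$, and the warped-product reconstruction of $g$ from umbilicity together with $\mathrm{R}\equiv 0$.

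The one genuine gap is the exclusion of critical points, which you rightly flag as the main obstacle but do not close. Neither of your proposed mechanisms works as stated: a critical point at some level does not visibly contradict the roundness of nearby regular levels, and ``the vanishing of all terms forbids the topological defect'' is an assertion, not an argument. The paper's resolution is a continuation argument whose key ingredient you already have. Let $T\in(0,+\infty]$ be maximal such that $\nabla u\neq 0$ on $\{u<1-1/T\}$; such a $T>0$ exists by the pole asymptotics, and on $(0,T)$ the function $F$ is $C^1$ with $F'\equiv 0$, so the ODE gives $\vert\nabla u\vert=(1-u)^2$ there. Since $u<1$ everywhere, $(1-u)^2$ is bounded away from zero on $\{u\leq 1-1/T\}$ whenever $T<+\infty$, so by continuity $\nabla u$ cannot vanish on $\{u=1-1/T\}$, contradicting maximality; hence $T=+\infty$, $\mathrm{Crit}(u)=\emptyset$, and only then is the global integration of the metric legitimate. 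A second, minor, imprecision: Gauss--Bonnet gives $\int_{\Sigma}\mathrm{R}^{\Sigma}\,d\sigma=8\pi$, not that $\mathrm{R}^{\Sigma}$ is constant, so the level sets are not seen to be round fibrewise; their roundness comes out of integrating $\partial_u g_{ij}=2g_{ij}/(1-u)$ with the round initial condition supplied by the asymptotics at the pole, as in~\cite{Cat_Man_Maz}.
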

\begin{proof}
We first claim that $\lim_{t\to 0^+}F(t) = 0$. To see this fact, we recall that $u$ is related to the minimal positive  Green's function $\mathcal{G}_o$ of $(M,g)$ with pole at $o$ through the formula $u=1-4\pi\mathcal{G}_{o}$. 
Consequently, there holds
\begin{equation}\label{eq17}
\int\limits_{\{u=1-\frac{1}{t}\}}\!\!\!\vert \nabla u\vert \, d\sigma \, \equiv  \, 4\pi\,
\end{equation}
for every $t\in (0,+\infty)\setminus\mathcal{N}$.
On the other hand, it is well known ({see for example~\cite{MRS}}) that $\mathcal{G}_o$ displays the following asymptotic behaviour near the pole:
\begin{align*}
\Big| \mathcal{G}_{o} - \frac{1}{4\pi r}\Big|&\, = \, o (r^{-1})\,,
\\
\Big\vert \nabla \mathcal{G}_{o}+\frac{1}{4\pi r^2}\,\nabla r\Big\vert& \,= \, o(r^{-2})\,,
\\
\!\!\!\!\!\!\!\!\!\!\!\!\!\!\!\!\!\!\!\!\Big\vert \nabla \nabla \mathcal{G}_{o}-\frac{1}{4\pi r^2}\,\Big(\,\frac{2}{r} \, dr\otimes dr- \nabla \nabla r\Big)\Big\vert& \, = \, o(r^{-3})\,,
\end{align*}
where $r$ denotes the distance to the pole $o$. As a consequence, in a sufficiently small neighborhood of $o$, 
the function $u$ is subject to the bounds 
$$
\frac{C_1}r\leq 1-u\leq\frac{C_2}r,
\qquad
\frac{C_3}{r^2}\leq\vert \nabla u \vert \leq \frac{C_4}{r^2},
\qquad
\vert \nabla  \nabla u \vert\leq \frac{C_5}{r^3},
$$
for some positive constants $C_{i}>0$, $i=1, \ldots, 5$. Combining these bounds with~\eqref{eq:hdubound} and~\eqref{eq17}, we conclude that 
\begin{align}
&t^{2} \!\!\!\!\!\!\int\limits_{\{u=1-\frac{1}{t}\}}\!\!\!\!\! \vert \nabla u \vert^{2} \, d\sigma \,\, \leq \, \!\!\!\!\!\!\int\limits_{\{u=1-\frac{1}{t}\}} \!\!\!\!\!\frac{C_4}{r^2(1-u)^2}\vert \nabla u \vert \,  d\sigma \, \leq \, \frac{C_4}{C_1^2} \!\!\!\! \int\limits_{\{u=1-\frac{1}{t}\}} \!\!\!\!\!\vert \nabla u \vert \, d\sigma \,\, \leq \, \frac{4 \pi C_4}{C_1^2}\,,\nonumber\\
&t\!\!\!\!\!\! \int\limits_{\{u=1-\frac{1}{t}\}}\!\!\!\!\! \vert \,\mathrm{H}\,\vert\,\vert \nabla u \vert\, \, d\sigma\, \leq \, \!\!\!\!\!\!\int\limits_{\{u=1-\frac{1}{t}\}} \!\!\!\! \frac{\vert \nabla \nabla u\vert}{1-u}\, d\sigma 
\, \leq \, \frac{C_5}{C_1 C_3} \!\!\!\! \int\limits_{\{u=1-\frac{1}{t}\}} \!\!\!\!\!\vert \nabla u \vert \, d\sigma \,\, \leq \, \frac{4 \pi C_5}{C_1 C_3} \, \nonumber.
\end{align}
Plugging these estimates into the definition of $F$, it easily follows that $F(t)\to 0$, as $t \to 0^+$.
Combining this fact with the nonotonicity of $F$ yield \eqref{eq:PMTAOM}.

Let us now focus our attention on the rigidity statement.
By the above discussion, we have that $u$ behaves like $1- 1/r$ and $\na u$ behaves like $\na r/r^2$ in a sufficiently small neighborhood of the pole $o$. In particular, there exists a maximal time $T$ such that $\na u \neq 0$ in $u^{-1}(0,T)$. It follows that $F$ is continuously differentiable on the interval $(0,T)$. Moreover, using~\eqref{eq:PMTAOM}, one easily gets that $F'\equiv 0$ in $(0,T)$, so that all the positive summands in formula~\eqref{monoliscia} are forced to vanish for every $t \in (0,T)$. This fact has very strong implications. First of all $\na^{\Sigma_t}|\na u| \equiv 0$ implies that $|\na u| = f (u)$, for some positive function $f:(0,T) \to (0 , + \infty)$. It turns out that such a function can be made explicit. Indeed, from~\eqref{monoliscia} one also has that $\HHH = 2 f(u)/(1-u)$. On the other hand, it follows from~\eqref{H} that $\HHH = -\pa |\na u|/ \pa u = -f'(u)$. All in all, we have that $f$ obeys the ODE 
$$
f'(u)=- \frac{2f(u)}{1-u} \, .
$$
Now, the only solution to this ODE which is compatible with the asymptotic behaviour of $u$ and $|\na u|$, as $u \to - \infty$, is given by 
$f(u) = (1-u)^2$. Since $u<1$ on the whole manifold, $f$ never vanishes, so that $T= +\infty$ and $|\na u| \neq 0$ everywhere.
In particular, all the level sets of $u$ are regular and diffeomorphic to each other. More precisely, by the vanishing of the Gauss-Bonnet term in~\eqref{monoliscia}, they are all diffeomorphic to a $2$-sphere and $M$ is diffeomorphic to $\R^3$. So far we have that the metric $g$ can be written on $M \setminus \{o\}$ as 
$$
g \, = \, \frac{du  \otimes du}{(1-u)^4} \, +
 \, g_{ij}
(u, \!\vartheta) \, d\vartheta^i  \otimes d\vartheta^j \, ,
$$ 
where $g_{ij}
(u, \!\vartheta) \, d\vartheta^i  \otimes d\vartheta^j$ represents the metric induced by $g$ on the level sets of $u$. Exploiting the vanishing of the traceless second fundamental form of the level sets in~\eqref{monoliscia}, it turns out that the coefficients $g_{ij}(u,\!\vartheta)$ obey the following first order system of PDE's
$$
\frac{\pa g_{ij}}{\pa u}  \, = \, \frac{2 \, g_{ij}}{1-u} \,  \, .
$$
Arguing as in~\cite[Theorem 1.1, Case 2]{Cat_Man_Maz}, 
one can deduce that $g_{ij}
(u, \!\vartheta) \, d\vartheta^i  \otimes d\vartheta^j = (1-u)^{-2} g_{\mathbb{S}^2},$ so that $g$ takes the form
$$
g \, = \, \frac{du  \otimes du}{(1-u)^4} \, + \frac{g_{\mathbb{S}^2}}{(1-u)^2} \, . 
$$
The prescription $\Ro \equiv 0$, which also follows from~\eqref{monoliscia}, implies that $u=1-1/r$ and $g = g_{\R^3}$, if used in combination with the asymptotic behaviour of $u$ and $|\na u|$, as $u \to - \infty$.
\end{proof}

\begin{remark}
\label{ptqlm}
The validity of both Theorem~\ref{effectivemonotonicity} and Corollary~\ref{positiveAOM} might suggest the interpretation of 
the monotonic quantity $F$ as a potential-theoretic notion of quasi-local mass. This impression is confirmed in Section~\ref{sec:ssl}, where the small sphere limit of the monotonic quantities is analysed in details. 
\end{remark}


\section{Proof of the positive mass theorem}


Building on Theorem~\ref{effectivemonotonicity}, we present in this section a new proof of the (Riemannian) Positive Mass Theorem, originally due to Schoen and Yau~\cite{Sch_Yau_1979,Sch_Yau_1981} and Witten~\cite{Witten}. 
The precise statement goes as follows:
\begin{theorem}[Positive Mass Theorem]
\label{PMT}
Let $(M,g)$ be a complete, asymptotically flat, $3$-dimensional Riemannian manifold with nonnegative scalar curvature. Then, the total ADM-mass of $(M,g)$ is nonnegative. In symbols,
\begin{equation*}
m_{ADM}(M,g) \geq 0.
\end{equation*}
Moreover, $m_{ADM}(M,g)=0$ if and only if $(M,g)$ is isometric to 
$(\R^{3},g_{\R^3})$.
\end{theorem}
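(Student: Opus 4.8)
The plan is to derive Theorem~\ref{PMT} from Corollary~\ref{positiveAOM}. The missing ingredient is the identity
$$
\lim_{t\to+\infty} F(t)\,\,=\,\,8\pi\, m_{ADM}(M,g)\,,
$$
for the monotone quantity $F$ attached --- through~\eqref{f0} and~\eqref{eq0} --- to the Green's function with pole at any point $o$ in the compact core of $(M,g)$. Once this is in hand, Corollary~\ref{positiveAOM} gives immediately $0\le 8\pi\, m_{ADM}(M,g)$, and the rigidity part of that corollary turns $m_{ADM}(M,g)=0$ into $(M,g)\cong(\R^3,g_{\R^3})$.

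First I would reduce to a setting in which Theorem~\ref{effectivemonotonicity} and Corollary~\ref{positiveAOM} apply. Asymptotic flatness already yields completeness, noncompactness and nonparabolicity (the end has Euclidean volume growth, so the minimal positive Green's function exists); the one remaining hypothesis is $H_2(M;\Z)=\{0\}$. I would obtain it by two standard steps, which I only quote: the Schoen--Yau density theorem lets us assume $g$ harmonically flat outside a compact set, with mass parameter as close as we wish to $m_{ADM}(M,g)$, and a further standard compact surgery lets us assume in addition that $M$ is diffeomorphic to $\R^3$, so that $H_2(M;\Z)=\{0\}$. These modifications change the mass by an arbitrarily small amount, so the inequality $m_{ADM}(M,g)\ge 0$ --- and, by the analogous discussion of the equality case, the rigidity statement --- is not affected.

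The core of the argument is the displayed identity. On the harmonically flat end, write $g=U^4 g_{\R^3}$ with $U=1+\tfrac{m}{2|x|}+O(|x|^{-2})$ and $m=m_{ADM}(M,g)$; there $U$ is Euclidean-harmonic, so by conformal covariance of the Laplacian in dimension three the product $U\,\mathcal{G}_o$ is Euclidean-harmonic on the end, and since it vanishes at infinity it expands as $\tfrac{1}{4\pi|x|}+O(|x|^{-2})$, the leading constant being fixed by the flux relation~\eqref{eq17}. Therefore $\mathcal{G}_o=\tfrac{1}{4\pi|x|}-\tfrac{m}{8\pi|x|^2}+O(|x|^{-3})$, whence $u=1-\tfrac1{|x|}+\tfrac{m}{2|x|^2}+O(|x|^{-3})$, the level set $\{u=1-1/t\}$ is a small perturbation of the Euclidean coordinate sphere of radius $r(t)=t-\tfrac m2+o(1)$, and on that surface $|\nabla u|$, $\mathrm{H}$ and $d\sigma$ all admit explicit expansions in powers of $1/r$. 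Inserting these into~\eqref{eq0} and expanding, the order-$t$ contributions of the three summands cancel identically, and the surviving $O(1)$ term is exactly $8\pi m$. Combined with the previous paragraph, this proves $m_{ADM}(M,g)\ge 0$, with equality only for $(\R^3,g_{\R^3})$.

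I expect the real obstacle to sit in this last step: one must control the expansions of $\mathcal{G}_o$, $\nabla\mathcal{G}_o$ and $\nabla\nabla\mathcal{G}_o$ near infinity precisely enough that the $O(1)$ coefficient of $F(t)$ is computed correctly, and in particular one must check that the non-radial, centre-of-mass type corrections to the level sets do not contribute at that order --- which is exactly why it is worthwhile to have reduced beforehand to a harmonically flat end. By contrast, the genuinely delicate analysis, namely the monotonicity of $F$ across the critical values of $u$, is already delivered by Theorem~\ref{effectivemonotonicity}, so no further work is needed there.
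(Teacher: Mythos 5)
Your argument for the inequality $m_{ADM}(M,g)\ge 0$ is essentially the paper's: reduce, via the density and topology results of Schoen--Yau and Bray--Kazaras--Khuri--Stern, to a manifold diffeomorphic to $\R^3$ with (in the paper, exactly) Schwarzschildian ends; establish the expansion $u=1-|x|^{-1}+\tfrac12|x|^{-2}\big(m+\phi(x/|x|)\big)+O_2(|x|^{-3+\alpha})$ with $\phi$ a first spherical harmonic (the paper's Lemma~\ref{espAScC}, obtained by writing the equation for the remainder in the Schwarzschildian chart rather than by conformal covariance, but to the same effect); then compute $\lim_{t\to+\infty}F(t)=8\pi m$. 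For this last step the paper rewrites $F(t)$ as a single integral $\int_{\{u=1-1/t\}}I\,|\nabla u|\,d\sigma$ and shows $I\to 2m$ pointwise, which together with the flux identity~\eqref{eq17} sidesteps the dipole corrections to the level sets that you rightly flag as the delicate point. Combined with $\lim_{t\to 0^+}F(t)=0$ and the monotonicity, this yields $m\ge 0$, exactly as in the paper.

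The one step that does not close as written is the rigidity. You propose to obtain it from the rigidity clause of Corollary~\ref{positiveAOM}, but that clause requires $\limsup_{t\to+\infty}F(t)=0$ \emph{exactly}, hence it applies only to a manifold which already satisfies the hypotheses of Theorem~\ref{effectivemonotonicity} (in particular $H_2(M;\Z)=\{0\}$) and whose mass vanishes. After the density/surgery reduction you are working with approximating manifolds whose masses are merely close to $m=0$, so the corollary gives nothing for them; and the original manifold, to which you would like to apply the corollary directly, need not satisfy $H_2(M;\Z)=\{0\}$ nor fall under Lemma~\ref{espAScC} without further work. ``The analogous discussion of the equality case'' is therefore not actually available. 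The paper avoids this by deducing rigidity from the inequality alone, through the classical Schoen--Yau conformal-perturbation argument: if $m=0$ and $(M,g)$ were not flat, one could deform $g$ to a nearby asymptotically flat metric with nonnegative scalar curvature and strictly negative mass, contradicting the first part. You should either invoke that argument or supply a quantitative stability statement; Corollary~\ref{positiveAOM} as stated does not do the job.
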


In what follows we just discuss the case where $(M,g)$ has only one end, 
from which the general statement can be deduced by nowadays standard arguments. For the reader's convenience, we recall that a complete, $3$-dimensional, one-ended Riemannian manifold $(M,g)$ is said to be {\em  asymptotically flat (of order $\tau$)} if there exists a compact set $K$ such that $M\setminus K$ is diffeomorphic to the exterior of a closed ball $\R^{3}\setminus \overline{B}$, through a so called {\em asymptotically flat coordinate chart} $x=(x^{1},x^{2},x^{3})$. In such a chart, the metric coefficients obey the expansion
\begin{equation}
g_{ij}(x)=\delta_{ij} + O_{2}(\vert x \vert ^{-\tau}),
\end{equation}
for some constant $\tau> 1/2$. Moreover, we assume that the scalar curvature is integrable on $M$. These latter conditions guarantee that the total ADM-mass of $(M,g)$, which can be computed through the chart $x$ as 
\begin{equation}
m_{ADM} (M,g) \, = \, \lim_{r\to +\infty}\, \frac{1}{16\pi} \int\limits_{\{\vert x\vert=r\}} 
 \sum_{i,j=1}^n \left( 
\frac{\partial g_{ij}}{\partial x^j}  
- \frac{\partial g_{jj}}{\partial x^i} \!
\right) \!
\frac{x^i}{|x|} \,\,  d\sigma_{\text{eucl}}
\,\label{eq4},
\end{equation}
is a well defined geometric invariant, whose value does not depend on the particular {\em asymptotically flat coordinate chart} employed in the above computation (see e.g.~\cite{Bartnik}).

\smallskip
Before proceeding with our proof of Theorem~\ref{PMT}, let us mention that since the original work of Schoen and Yau, several other approaches have been proposed to prove this crucial result.
Far from being complete and referring the reader to~\cite{Lee_book} for a comprehensive survey on this topic, we just mention that the first alternative proof was found by Witten~\cite{Witten}, using harmonic spinors. Another route to the Positive Mass Theorem was subsequently provided by the Huisken-Ilmanen's theory of the Weak Inverse Mean Curvature Flow~\cite{Hui_Ilm_2001}, in combination with the Monotonicity of the Hawking Mass observed by Geroch~\cite{Ger}. Yet another proof of the Positive Mass Theorem has been recently proposed by Li~\cite{Li}, using the Ricci Flow. Finally, in a {recent paper}, Bray, Kazaras, Kuhri and Stern~\cite{Bra_Kaz_Khu_Ste_2019} were able to provide a new argument, based on the study of the level sets of linearly growing harmonic functions combined with integral identities, deduced via the Bochner technique. This latter approach, inspired by~\cite{Stern}, turned out to be flexible enough to also allow for the treatment of the space-time case~\cite{Hir_Kaz_Khu,Bra_Hir_Kaz_Khu_Zha_2021}, and, together with some of the computations carried out in~\cite{Jez_Kij,Jez_1989}, is probably the method that displays the largest number of analogy with ours. On this regard, it should be mentioned that level set methods, combined with Bochner technique and integral identities, have recently found some applications to the study of static metrics in general relativity~\cite{Ago_Maz_CMP,Ago_Maz_Oro_1}, even when the function under consideration is not necessarily harmonic~\cite{Bor_Maz_1,Bor_Maz_2,Bor_Chr_Maz}.

\begin{proof}
For the sake of notation, let us set $m = m_{ADM} (M,g)$, and let us focus on the first part of the positive mass statement, i.e., $m\geq 0$.
By the very accurate discussion given in~\cite[Section 2]{Bra_Kaz_Khu_Ste_2019}, and in particular from~\cite[Proposition 2.1]{Bra_Kaz_Khu_Ste_2019}, the analysis can be reduced to the case where the underlying manifold $M$ is diffeomorphic to $\R^3$ and there exists a distinguished asymptotically flat coordinate chart $x=(x^1,x^2,x^3)$ -- called {\em Schwarzschildian coordinate chart} -- in which the metric $g$ can be expressed as 
\begin{equation}
\label{eq:schwas}
g \, = \, \Big(1+\frac{m}{2\vert x\vert}\Big)^{\!4}\,\delta_{ij} \,\, dx^{i}\!\otimes dx^{j}.
\end{equation}
A manifold of this kind clearly fulfills all the assumptions made in Theorem~\ref{effectivemonotonicity}. In particular, there exists {the solution $u$ to~\eqref{f0}, and consequently the function $F$ defined as in~\eqref{eq0}.} 
Also, in virtue of the monotonicity of $F$ established 
in Theorem~\ref{effectivemonotonicity},  we have that 
\begin{equation}
\label{eq18}
\lim\limits_{t\to 0^+}F(t) \,\, \leq \, \lim\limits_{t\to+\infty} \!\!F(t)\,.
\end{equation}
As we have seen in the proof of Corollary~\ref{positiveAOM}, it is a general fact that $\lim_{t\to 0^+}F(t) = 0$. We now claim that $\lim_{t\to+\infty} \!F(t)= 8 \pi m$. It is clear that combining this claim with~\eqref{eq18}, one easily gets $m \geq 0$. In order to compute the limit of $F(t)$ as $t \to + \infty$, it is useful to understand the asymptotic behaviour of $u$ at infinity. This is done in the following lemma.

\begin{lemma}\label{espAScC}
The function $u$ satisfies the asymptotic expansion at infinity
\begin{align}
u& \, = \, 1-\frac1{\vert x\vert}+\frac1{2\vert x\vert^{2}}\,
\big(m+\phi (x/\vert x\vert)\big)+O_2\big(\vert x\vert^{-3+	\alpha} \big) \, ,\label{expansionofu}
\end{align}
where $x= (x^1,x^2,x^3)$ is a Schwarzschildian coordinate chart at infinity, $\phi$ satisfies $\Delta_{\SSS^{2}}\phi=-2\phi$, and $0<\alpha<1/2$ is a fixed real number that can be chosen as small as desired.
\end{lemma}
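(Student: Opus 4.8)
The plan is to transfer the question to one about harmonic functions on an exterior Euclidean domain, by exploiting the conformal covariance of the Laplacian in dimension three. In the Schwarzschildian chart one has $g = \psi^4\,\delta$ with $\psi := 1 + m/(2|x|)$, and $\psi$ is harmonic on $\{|x|>0\}$, so that — exactly as the flat metric $\delta$ — the metric $g$ is scalar--flat on the region where the chart is defined. The conformal transformation rule for the conformal Laplacian in dimension three then reads
\[
\Delta_\delta(\psi\,\varphi) \,=\, \psi^{5}\,\Delta_g\varphi
\]
for every function $\varphi$. Applying this to $\varphi = \mathcal{G}_o = (1-u)/(4\pi)$, the minimal positive Green's function of $(M,g)$ with pole at $o$ (which is $g$--harmonic on the Schwarzschildian exterior region $M\setminus K\cong\R^3\setminus\overline{B}$), one obtains that $w:=\psi\,\mathcal{G}_o$ is a positive function, $\delta$--harmonic on $\R^3\setminus\overline{B}$, tending to $0$ at infinity because $\psi\to 1$ and $\mathcal{G}_o\to 0$.

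Next I would invoke the classical exterior multipole expansion: a harmonic function on $\R^3\setminus\overline{B}$ vanishing at infinity admits an expansion $w(x)=\sum_{\ell\ge 0}|x|^{-\ell-1}\,Y_\ell(x/|x|)$, with $Y_\ell$ a spherical harmonic of degree $\ell$, the series converging together with all its derivatives on $\{|x|\ge R\}$ for $R$ large (this smooth convergence follows from interior elliptic estimates applied on dyadic annuli). In particular
\[
w(x) \,=\, \frac{c_0}{|x|} \,+\, \frac{Y_1(x/|x|)}{|x|^{2}} \,+\, O_\infty\!\big(|x|^{-3}\big),
\]
and it only remains to identify the constant $c_0$ and to untwist the conformal factor.

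To pin down $c_0$ I would use the normalization of $\mathcal{G}_o$. From $\Delta_g u = 4\pi\,\delta_o$ one gets $\Delta_g\mathcal{G}_o=-\delta_o$, so the Divergence Theorem on $\{|x|<\rho\}$ shows that the $g$--flux of $\nabla\mathcal{G}_o$ through $\{|x|=\rho\}$ equals $-1$; equivalently one may use directly $\int_{\{u=\tau\}}|\nabla u|\,d\sigma\equiv 4\pi$ from~\eqref{eq17}. Since $g\to\delta$ and $\psi\to 1$ at infinity while $w=\psi\,\mathcal{G}_o = c_0/|x| + O(|x|^{-2})$, the discrepancy between the $g$--flux and the $\delta$--flux of $\nabla\mathcal{G}_o$ through large coordinate spheres is $o(1)$, and the Euclidean flux of $\nabla(c_0/|x|)$ is $-4\pi c_0$; hence $c_0 = 1/(4\pi)$. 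Finally, writing $\mathcal{G}_o = w\,\psi^{-1} = w\,(1+m/(2|x|))^{-1}$ and expanding, the $|x|^{-1}$--coefficient of $\mathcal{G}_o$ is $c_0 = 1/(4\pi)$ while its $|x|^{-2}$--coefficient is $Y_1 - c_0 m/2$. Therefore
\[
u \,=\, 1-4\pi\mathcal{G}_o \,=\, 1-\frac1{|x|}+\frac1{2|x|^{2}}\big(m+\phi(x/|x|)\big)+O_\infty\!\big(|x|^{-3}\big)
\]
upon setting $\phi:=-8\pi\,Y_1$, which is a spherical harmonic of degree $1$, so that $\Delta_{\SSS^{2}}\phi=-2\phi$; in particular the remainder is $O_2(|x|^{-3+\alpha})$ for any $\alpha\in(0,1/2)$, and \eqref{expansionofu} follows.

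The step I expect to require the most care is the bookkeeping around the conformal weights and the flux normalization: one must check that the conformal covariance identity is applied with the correct exponents in dimension three, that $\mathcal{G}_o$ is genuinely $g$--harmonic on the whole Schwarzschildian exterior region, and that the mismatch between the $g$--flux and the $\delta$--flux of $\nabla\mathcal{G}_o$ through large coordinate spheres is indeed $o(1)$, so that the value $c_0 = 1/(4\pi)$ is correctly identified. Everything else is a routine manipulation of the exterior harmonic expansion, which in fact yields the stronger remainder $O_\infty(|x|^{-3})$ rather than the claimed $O_2(|x|^{-3+\alpha})$.
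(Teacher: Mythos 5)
Your proof is correct, but it follows a genuinely different route from the one in the paper. The paper starts from the general expansion $u = 1 - \mathcal{C}/\vert x\vert + O_2(\vert x\vert^{-2+\alpha})$ for harmonic functions on asymptotically flat ends (quoted from the appendix of Mantoulidis--Miao--Tam), identifies $\mathcal{C}=1$ via the flux identity \eqref{eq17}, and then bootstraps: it rewrites $\Delta_g u=0$ as a Euclidean equation for the remainder $h$, inserts the explicit ansatz $h=\mathcal{U}^{-1}m/(2\vert x\vert^{2})+f$, and extracts the next term of $f$ from the decay of the source and the structure of the kernel of $\Delta_{\R^3}$. You instead exploit the fact that, after the reduction of Bray--Kazaras--Khuri--Stern, the metric is \emph{exactly} $\psi^4\delta$ with $\psi=1+m/(2\vert x\vert)$ Euclidean-harmonic on the end, so that the dimension-three conformal covariance $\Delta_\delta(\psi\varphi)=\psi^5\Delta_g\varphi$ turns $\mathcal{G}_o$ into a genuinely $\delta$-harmonic function $w=\psi\,\mathcal{G}_o$ on an exterior Euclidean domain; the classical multipole expansion (Kelvin transform plus removable singularity) then does all the work, with the flux identity again fixing the monopole coefficient. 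Your route is cleaner in this setting and yields the stronger remainder $O_\infty(\vert x\vert^{-3})$; what it gives up is robustness, since it uses the exact Schwarzschildian form of $g$ on the whole end, whereas the paper's PDE/bootstrap argument only needs the metric to be asymptotically of that form (and the same scheme is reused in Section 3 for $p$-harmonic functions, where no conformal trick is available). The two points you flag as delicate both check out: $R_g=0$ on the end because $\psi$ is $\delta$-harmonic, and $\langle\nabla\mathcal{G}_o,\nu_g\rangle_g\,d\sigma_g=\psi^{2}\,\partial_{\nu}\mathcal{G}_o\,d\sigma_\delta$ with $\psi^{2}=1+O(\vert x\vert^{-1})$, so the normalization $c_0=1/(4\pi)$ is correctly identified, and the bookkeeping $\phi=-8\pi Y_1$ with $\Delta_{\SSS^2}\phi=-2\phi$ is consistent.
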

\begin{proof}[Proof of Lemma~\ref{espAScC}]
First of all we notice that by virtue of the Schwarzschildian asymptotics~\eqref{eq:schwas}, our metric is asymptotically flat of order $\tau=1$. In particular, by~\cite[Appendix A]{MMT}, we know that 
\begin{equation}\label{eq7}
u(x)=1-\frac{\mathcal{C}}{\vert x\vert}+h(x),
\qquad \text{with}\quad h(x)=O_{2}\big(\vert x\vert^{-2 + \alpha}\big)\,,
\end{equation}
for some fixed $0<\alpha<1/2$, that can be chosen as small as desired. Moreover, we have that $\mathcal{C}=1$, in view of~\eqref{eq17}.
Our aim is to investigate the structure of the remainder $h$, eventually proving that
$$
h(x) \, = \, \frac{1}{2\vert x\vert^{2}}\,\big(m+\phi (x/\vert x\vert)\big)+O_{2}\big(\vert x\vert^{-3 + \alpha} \big) \, . 
$$
Writing the equation $\Delta u=0$ in terms of the background Euclidean metric $g_{\R^3}$, it is readily seen that $h$ satisfies the equation
\begin{equation}
\label{eqne_h}
\Delta_{\R^3} h(x)\, - \,\mathcal{U}^{-1}\frac{m}{\vert x\vert^{2}} 
\, \left\langle \! \mathrm{D}h(x) , \frac{x}{\vert x\vert}\right\rangle_{\!\!\R^3} 
\!\!=\, 
\mathcal{U}^{-1}\,\frac{m}{\vert x\vert^{4}},
\end{equation}
where we have set 
$\mathcal{U}=1+\frac{m}{2\vert x\vert}$, for the ease of the reader. 
To proceed, we plug into \eqref{eqne_h} the {\em ansatz} 
\begin{align}\label{eq8}
h(x)=\mathcal{U}^{-1}\frac{m}{2\vert x\vert^{2}}+f(x)\,,
\end{align}
with $f(x)=O_{2}\big(\vert x\vert^{-2 + \alpha}\big)$
and find out that the new remainder $f$ satisfies the  equation
\begin{align}
\label{eq:remf}
\Delta_{\R^3} f(x)\, = \,\mathcal{U}^{-1}\frac{m}{\vert x\vert^{2}} 
\, \left\langle \! \mathrm{D}f(x) , \frac{x}{\vert x\vert}\right\rangle_{\!\!\R^3}
.
\end{align}
Observing that $\mathcal{U} \to 1$ as $|x|\to +\infty$, our claim on $f$ is that 
$$
f(x) \, = \, \frac{\phi (x/\vert x\vert)}{2\vert x\vert^{2}}\,+O_{2}\big(\vert x\vert^{-3+ \alpha} \big) \, . 
$$
To prove this claim, we observe that, since $f(x)=O_{2}\big(\vert x\vert^{-2+ \alpha}\big)$, the right hand side of~\eqref{eq:remf} can be estimated 
by an $O_{1}\big(\vert x\vert^{-5 + \alpha}\big)$. Moreover, among the elements lying in the kernel of $\Delta_{\R^3}$ which are compatible with the condition $f(x)=O_{2}\big(\vert x\vert^{-2 + \alpha}\big)$, the ones with the slowest decay rate are of the form ${\phi (x/\vert x\vert)}/({2\vert x\vert^{2}})$, for some $\phi$ as in the statement of the lemma, whereas the other ones 
can be estimated by an $O_{2}\big(\vert x\vert^{-3}\big)$. {The claim, as well as the thesis of the lemma, follows now from the standard theory for elliptic PDEs.} 
\end{proof}

With Lemma~\ref{espAScC} at hand, we can now compute the limit on the right hand side of~\eqref{eq18}. To this aim, it is convenient to rewrite $F$ as 
$$
F(t) \, = \!\!\!\!\!\!\int\limits_{\{u=1-\frac{1}{t}\}} \!\!\!\! \frac{1}{1-u}  \left[ 1 {+
\frac{\langle \nabla \vert \nabla u \vert,\!\nabla u \rangle}{(1-u)\, \vert \nabla u \vert^{2}}}
+\frac{\vert \nabla u \vert}{(1-u)^{2}}\right] \vert \nabla u \vert \,d\sigma \,.
$$
By virtue of Lemma~\ref{espAScC}, we have that 
\begin{align}
\vert \nabla u\vert&\, = \, \frac{1}{\,\,\vert x\vert^{2}}\left[\,1-\frac{1}{\vert x\vert}\big(2m+\phi (x/\vert x\vert)\big)+O\big(\vert x\vert^{-2+\alpha}\big)\,\right]\,,\nonumber\\
\frac{ \langle \nabla \vert \nabla u \vert, \!\nabla u \rangle}{\vert \nabla u \vert^{2}}&\, = \, -\frac{2}{\vert x\vert}\left[\,1-\frac{1}{2\vert x\vert}\big(4m+\phi (x/\vert x\vert)\big)+O\big(\vert x\vert^{-2+\alpha}\big)\,\right]\nonumber \,,
\end{align}
so that 
$$
\lim_{|x|\to + \infty}\frac{1}{1-u}  \left[ 1 -
\frac{\langle \nabla \vert \nabla u \vert,\!\nabla u \rangle}{(1-u)\, \vert \nabla u \vert^{2}}
+\frac{\vert \nabla u \vert}{(1-u)^{2}}\right] \, = \, 2m \, .
$$
In particular, for every $\ep>0$, there exists $t_\ep>0$ such that whenever {$u\geq 1-1/t_\ep$} one has that
$$
2m-\ep \, \leq \, \frac{1}{1-u}  \left[ 1 -
\frac{\langle \nabla \vert \nabla u \vert,\!\nabla u \rangle}{(1-u)\, \vert \nabla u \vert^{2}}
+\frac{\vert \nabla u \vert}{(1-u)^{2}}\right] \! (x)\, \leq \, 2m + \ep \, .
$$
Using this fact in combination with~\eqref{eq17}, we deduce that, for every $t\geq t_\ep$, it holds
$$
4\pi(2m-\varepsilon)
\,\leq\, 
F(t)
\,\leq\, 
4\pi(2m+\varepsilon).
$$
Therefore, we have that $\lim_{t \to + \infty}F(t) = 8\pi m$ and 
in turn that $m\geq0$.

The rigidity statement, i.e. the fact that $(M,g)$ and $(\R^{3},g_{\R^3})$ are isometric if $m=0$, can be deduced from the validity of the inequality $m \geq 0$, through the nowadays standard argument proposed in the original Schoen-Yau's paper~\cite{Sch_Yau_1979} (see also~\cite[pp.95-97 and p.102]{Lee_book}).
\end{proof}


\section{Small sphere limits and nonnegative scalar curvature}
\label{sec:ssl}


As mentioned in Remark~\ref{ptqlm}, Theorem~\ref{effectivemonotonicity} and Corollary~\ref{positiveAOM} support the notion of the quantity $F$, introduced in~\eqref{f0}, as a quasi-local mass. This section provides additional evidence in favour of this idea. Specifically, we will calculate the small sphere limits of our quantities and observe that they can be utilized to accurately determine, with only a constant factor discrepancy, the scalar curvature's precise value. 
{It is important to note that in General Relativity this geometric invariant coincides for time-symmetric initial data sets with mass/energy density.} Therefore, it is reassuring to contemplate that the very same quantities that allow us to establish the non-negativity of the total mass are inherently connected to mass density. Before delving into the explicit computations, let us briefly review the well-established case of the Hawking mass, which serves as a prominent example of quasi-local mass. The Hawking mass is defined as follows:
%
%
%
%
%
$$
m_H (\Sigma) = \sqrt{\frac{|\Sigma|}{16 \pi}} \left( 1 - \frac{1}{16 \pi} \int_\Sigma \HHH^2 \,  d \sigma \right) \, ,
$$
where $\Sigma$ is a smooth closed surface embedded in a $3$-dimensional Riemannian manifold $(M,g)$. In~\cite{FST} the following small-sphere limit of the Hawking mass was computed, leading to
\begin{equation*}
m_H \big(\pa B_q(t)\big) \, = \, \frac{\RRR(q)}{12} \, t^3 + \left( \frac{\Delta \RRR (q)}{120} - \frac{\RRR^2(q)}{144}\right) \, t^5 + O(t^6) \, , \qquad \hbox{as $t \to 0^+$,}
\end{equation*}
where $q$ is a point in $M$, $B_q(t)$ is the ball of radius $t$ centred at $q$, and $\RRR$ denotes the scalar curvature of $(M,g)$. We are going to prove that a similar expansion holds for (a localised version of) the monotonic quantities introduced in Section~\ref{effmon}. To this aim, let us consider a local Green's function $\mathcal{G}_{q}$ with pole at $q \in M$, i.e., the unique distributional solution (see~\cite{aubin1}) to the problem 
\begin{equation}\label{f2}
\begin{cases}
\Delta \mathcal{G}_{q}\,=\,-\delta_{q}\ &\mathrm{in} \ B_q(r_q) \, ,\\
\quad \,\,\mathcal{G}_{q}\,=\,0 &\mathrm{on} \  \partial B_q(r_q) \, ,
\end{cases}
\end{equation}
where we agree that $r_q= \mathrm{inj}(q)/2$ if $\mathrm{inj}(q)<+ \infty$, and $r_q=1$ otherwise.
Here, of course, $\mathrm{inj}(q)$ denotes the injectivity radius at $q$. In particular, the distance function $d_{g}(q,\cdot)$ is smooth on the punctured ball $B_q^*(r_q) = B_q(r_q) \setminus \{ q\}$.
Following~\cite[Proposition B.1]{LiZhu}, it is immediate to check that if $(x^1,x^2,x^3)$ is a normal coordinate system centred at $q$, then $\mathcal{G}_q$ satisfies the asymptotic expansion
\begin{equation}
\label{asymptoticbehaviourofGq}
\mathcal{G}_{q}\,=\,\frac{1}{4\pi \vert x\vert}\,+\,A_{q}\,+\,O_{2}(\vert x\vert^{1-\tau}) \, , \qquad \hbox{as $|x| \to 0$\,,}
\end{equation}
where the exponent $\tau$ can be chosen arbitrarily in $(0,1)$. Notice that, since $\vert x(\cdot)\vert=d_{g}(q,\cdot)$, the constant $A_q$ remains the same regardless of the set of normal coordinates centered at $q$ that is being used. To introduce the localised analog of the monotonic quantity $F$ defined in~\eqref{f0}, it is convenient to consider, on the punctured (closed) ball 
$\overline{B}_{r}^{\,*}(r_q)$, the function
\begin{equation}
\label{definitionuq}
u_{q}\,:=\,1 + 4 \pi A_q - 4\pi \mathcal{G}_{q}\,.
\end{equation}
By the maximum principle, the range of $u_q$ coincides with the interval $(-\infty, 1+ 4\pi A_q]$. In analogy with~\eqref{f0}, we set
\begin{equation}\label{ffeq0}
F_{q}(t)\,\, = \,\, 4\pi t \,\,-\,\,\, t^{2} \!\!\!\int\limits_{\{u_{q}=1-\frac{1}{t}\}}\!\!\!\vert \nabla u_{q} \vert\, \mathrm{H} \,\, d\sigma \, \, + \,\, t^{3} \!\!\!\int\limits_{\{u_{q}=1-\frac{1}{t}\}}\!\!\!\vert \nabla u_{q} \vert^{2} \,\, d\sigma \,,
\end{equation}
for $t \in I_q$, where the set $I_q$ depends on the sign of the constant $A_q$. More precisely, if $A_q <0$, we let $I_q = (0, -1/4\pi A_q)$, whereas if $A_q \geq 0$, we let $I_q = (0, +\infty)$. 
Notice that this choice might appear slightly restrictive in the case where $A_q>0$, as it corresponds to consider only the level sets of $u_q$ with values in $(-\infty, 1)$. Indeed, it would be possible to extend the definition of $F_q$ to the set $(-\infty, - 1/ 4 \pi A_q)$ in order to cover the whole natural range of $u_q$. However, since we are only interested in the small sphere limit of our quantities, this will result in an unnecessary complication. 
With these notations at hand we are now ready to state the main result of this section.
\begin{theorem}[Small Sphere Limit]
\label{thm:ssl}
Let $(M,g)$ be a {complete, $3$-dimensional Riemannian manifold.} Then, at every point $q \in M$, the following expansion holds
\begin{equation}\label{ffeq20}
\frac{F_{q}(t)}{8\pi}\,=\,\frac{\Ro(q)}{12}\,t^3+o(t^{3}) \, , \qquad \hbox{as $t \to 0^+$} \, ,
\end{equation}
where $\RRR$ is the scalar curvature of $g$ and $F_q$ is the quantity defined in~\eqref{ffeq0}.
\end{theorem}
An immediate consequence of the aforementioned statement is the following characterization of the non-negativity of the scalar curvature in terms of the monotonicity of the functions $F_q$, for $q \in M$.
\begin{corollary}[Scalar Curvature Lower Bound]
{Let $(M,g)$ be a complete, $3$--dimensional Riemannian manifold. }
Then, the scalar curvature $\RRR$ of the metric $g$ is nonnegative on $M$ if and only if for every  $q \in M$ the function $F_q$ is either nonnegative or monotonically non decreasing on an initial portion $(0, \ep_q]$ of $I_q$, for some $\ep_q >0$.
\end{corollary}
It is worth noticing that the above characterisation might suggest a plausible notion of synthetic scalar curvature lower bound, to be empoyed in the context of nonsmooth differential and Riemannian geometry. 

\begin{proof}[Proof of Theorem~\ref{thm:ssl}]
As the point $q$ will remain unchanged throughout the proof, we drop it from the notation whenever it is possible. Our first claim is that
\begin{equation}\label{ffeq1}
\lim_{t\to 0^+}\frac{F(t)}{\vert \Omega(t)\vert}\,=\,\frac{\mathrm{R}(q)}{2}\,,
\end{equation}
where $\Omega(t)= \{u<1-1/t\}$ and $\vert  \Omega(t)\vert$ denote its  measure.  In a given normal coordinates system $(x^1,x^2,x^3)$ centred at $q$, so that~\eqref{asymptoticbehaviourofGq} hold, we have that the following expansions are in force
\begin{align}
1-u\,&=\,\frac{1}{\vert x\vert}\,\big(1+O(\vert x\vert^{2-\tau})\big)\,,\label{ffeq9}\\
\partial_{\alpha}u\,&=\,\frac{1}{\vert x\vert^{2}}\bigg[\frac{x_\alpha}{\vert x\vert}+O(\vert x\vert^{2-\tau})\bigg]\,,\label{ffeq14}\\
\vert \nabla u \vert\,&=\,\frac{1}{\vert x\vert^2}\,\big(1+O(\vert x\vert^{2-\tau})\big)\,,\label{ffeq10}\\
\nabla_\alpha\nabla_{\beta}u&\,=\,\frac{1}{\vert x\vert^{3}}\bigg[\delta_{\alpha \beta}-\frac{3x_\alpha x_\beta}{\vert x\vert^2}+O(\vert x\vert^{2-\tau})\bigg]\,,\label{ffeq15}\\
\mathrm{H}\,&=\,\frac{2}{\vert x\vert}\,\big(1+O(\vert x\vert^{2-\tau})\big)\,,\label{ffeq13}
\end{align}
where the function $u$ is defined as in~\eqref{definitionuq}, and we have set {$x_\alpha = \delta_{\alpha \beta} x^\beta$}. Moreover, as far as distance, area and volume are concerned, it is easily seen that, for every $p \in \Sigma_t= \{ u = 1-1/t\}$,
\begin{equation}
\label{ffeq3}
A_{1} t \,\leq\,\vert x(p)\vert\,\leq \,A_{2} t \quad \hbox{and} \quad B_1 t^2 \leq |\Sigma_t| \leq B_2 t^2 \quad \hbox{and} \quad C_1 t^3 \leq |\Omega (t)| \leq C_2 t^3\, ,
\end{equation}
as expected. Arguing as in the proof of Corollary~\ref{positiveAOM}, it is not hard to show  that $\lim_{t \to 0^+} F(t) = 0$. By l'H\^opital's rule, we have that Claim~\eqref{ffeq1} is equivalent to 
\begin{equation}
\label{ffeq7}
\lim_{t\to 0^+}\frac{F'(t)}{\vert \Omega(t)\vert'}\,=\,\frac{\mathrm{R}(q)}{2}\,.
\end{equation}
Notice that, by virtue of~\eqref{ffeq10}, we can always select a small initial interval of times such that the level sets $\Sigma_t$ of $u$ are regular. In the same interval, both the function $t \mapsto F(t)$ and $t \mapsto |\Omega(t)|$ turn out to be continuously differentiable. In particular, we can compute
\begin{align}
F'(t) \, &=  \,  \int\limits_{\Sigma_t}\left[ \, \frac{\vert\,\nabla^{\Sigma_t}\vert \nabla u\vert\,\vert^{2}}{\vert \nabla u \vert^{2}}+\frac{\Ro}{2}+\frac{\,\vert \mathring{\mathrm{h}}\vert^{2}}{2}\, +\frac{3}{4}\left(\,\frac{2\vert \nabla u \vert}{1-u} -\mathrm{H}\right)^{2}\,\right] d\sigma\,,
\nonumber
\\
\vert \Omega(t)\vert'\,&=\,\frac{1}{t^{2}}\int\limits_{\Sigma_t} \frac{d\sigma}{\vert \nabla u\vert} \, = \,  \int\limits_{\Sigma_t} \frac{(1-u)^2}{\vert \nabla u\vert}\,d\sigma\,,
\nonumber
\end{align}
where the second formula is an easy consequence of the Coarea Formula, whereas the first one follows from~\eqref{monoliscia}, taking into account that, for small enough $t$, all the level sets $\Sigma_t$ are spherical, and thus 
$$
 4\pi - \int\limits_{\Sigma_t}\!\frac{\,\rm{R}^{\Sigma_{t}}}{2} \, d\sigma  \, = \, 0 \, ,
$$
by the Gauss-Bonnet Theorem. To compute the limit in~\eqref{ffeq7}, we first observe that
\begin{equation}
\label{ffeq11}
\vert \Omega(t)\vert'\,=\,|\Sigma_{t}| \, (1+O(t^{2-\tau})) \,,
\end{equation}
since, by formulas~\eqref{ffeq9} and~\eqref{ffeq10}, one has
\begin{equation}
\label{espansione utile}
\frac{(1-u)^2}{\vert \nabla u\vert}\,=\,1+O(\vert x\vert^{2-\tau}) \, . \qquad \,\, 
\end{equation}
On the other hand, the simple Taylor expansion of the scalar curvature at $q$ gives
$$
\Ro\,=\,\Ro(q)+O(\vert x\vert)\,,
$$
which implies
\begin{equation} 
\label{ffeq8}
\int_{\Sigma_t}\!\Ro\, d\sigma\,=\,|\Sigma_{t}| \big(\Ro(q)\,+O(t)\big) \, . \quad
\end{equation}
Combinig~\eqref{ffeq11} with~\eqref{ffeq8}, one gets
\begin{equation}\label{ffeq16}
\lim_{t\to 0^+}\frac{ \frac{1}{2}\int_{\Sigma_t}\!\Ro\, d\sigma}{\vert \Omega(t)\vert'}\,=\,\frac{\mathrm{R}(q)}{2}\,.
\end{equation}
Hence, Claim~\eqref{ffeq7} is proven, if we can show that
\begin{equation}
\label{bigo_part}
\int\limits_{\Sigma_t}\left[ \, \frac{\vert\,\nabla^{\Sigma_t}\vert \nabla u\vert\,\vert^{2}}{\vert \nabla u \vert^{2}}+\frac{\,\vert \mathring{\mathrm{h}}\vert^{2}}{2}\, +\frac{3}{4}\left(\frac{2\vert \nabla u \vert}{1-u} -\mathrm{H}\!\right)^{\!\!2}\,\right] \,d\sigma\,=\,O(t^{4-2\tau})\,.
\end{equation}
Actually, we are going to prove that the function in square brackets is of order $O(|x|^{2-2\tau})$, as $|x| \to 0$. The desired estimate will then be a consequence of~\eqref{ffeq3}. To this end, we first observe that the expansions~\eqref{ffeq9},~\eqref{ffeq10} and~\eqref{ffeq13} imply that 
\begin{equation}
\frac{2\vert \nabla u \vert}{1-u} -\mathrm{H}\,=\,O(\vert x\vert^{1-\tau})\,,
\end{equation}
and thus the last summand in the square brackets is of order $O(|x|^{2-2\tau})$. To evaluate the first summand, we recall identity~\eqref{H}
and we observe that
\begin{align}
\nabla^{\Sigma_t}_\alpha |\nabla u| &\, = \,
{ \nabla_\alpha |\nabla u| - \langle   \nabla |\nabla u | , \nu  \rangle \, \nu_\alpha}\, = \, \na_{\!\alpha} |\na u| +  \HHH \, \nabla_{\!\alpha} u \, = \, \frac{\na \na u  \left( {\na u} , \partial_\alpha \right)}{|\na u|} +  \HHH \, \partial_\alpha u \, 
\nonumber\\
& \, = \, -2 \frac{x_\alpha}{|x|^4} + O(|x|^{-1-\tau}) + 2 \frac{x_\alpha}{|x|^4} + O(|x|^{-1-\tau})  \, = \,O(|x|^{-1-\tau}) \, , \nonumber
\end{align}
where in the last row we used the asymptotic expasions~\eqref{ffeq14}, \eqref{ffeq10}, \eqref{ffeq15} and \eqref{ffeq13}. It follows that
\begin{equation*}
\frac{\vert\,\nabla^{\Sigma_t}\vert \nabla u\vert\,\vert^{2}}{\vert \nabla u \vert^{2}} \, = \,  O (|x|^{2-2\tau}) \, .
\end{equation*}
Finally, we need to prove that also $\vert \mathring{\mathrm{h}}\vert^{2}\,=\,\, O(\vert x\vert^{2-2\tau})$. To see this, let $\ep>0$ be a positive real number such that $\ep^2<1/3$ and let $U_\alpha$ be the open set defined by
\begin{equation}
U_{\alpha}\,=\,\bigg\{p\in B_{q}^{\,*}(r_q):\,  \frac{\vert x^{\alpha} \vert}{\vert x\vert} (p)>\ep\,\,\text{and}\,\,\vert \nabla u\vert(p)\neq 0\bigg\}\, \, ,
\end{equation}
for $\alpha = 1,\!...,\!3$. Due to our choice of $\ep>0$, it is easy to realize that 
\begin{equation}\label{ffeq17}
U_{1}\cup U_{2}\cup U_{3}\,=\,\big\{p\in B_{q}^{\,*}(r_q):\, \vert \nabla u\vert(p)\neq 0\big\}\, \, .
\end{equation}
Without loss of generality, let us focus on the open set $U_1$ and prove the desired estimate $\vert \mathring{\mathrm{h}}\vert^{2}\,=\,\, O(\vert x\vert^{2-2\tau})$ on it. To perform our computations, it is convenient to consider on $U_1$, the frame field given by $\{\na u/|\na u|, X_2, X_3\}$, where the vector fields $X_2$ and $X_3$ are defined as
\begin{equation*}
X_{2}\,=\,-\partial_{2}u\,\partial _{1}\,+\,\partial_{1}u\,\partial _{2} \, , \qquad \hbox{and} \qquad   X_{3}\,=\,-\partial_{3}u\,\partial _{1}\,+\,\partial_{1}u\,\partial _{3} \,.
\end{equation*}
It is immediate to check that $\na u / |\na u|$ is orthogonal to both $X_2$ and $X_3$ and that the latter are linearly independent on $U_1$.
To complete the set-up, we let $\{du/|\na u| , \xi^2, \xi^3 \}$ be the dual co-frame of $\{\na u/|\na u|, X_2, X_3\}$. In this framework, {using the expansions~\eqref{ffeq14},~\eqref{ffeq10} and~\eqref{ffeq15}}, we can write
\begin{align}
g_{\Sigma}\,=\,g^{\Sigma}_{ij}\,\xi^{i}\!\otimes \xi^{j}&\,=\,g(X_{i},X_{j})\,\xi^{i} \!\otimes \xi^{j}\,=\,\frac{1}{\vert x\vert^{4}}\bigg[\frac{x_{i}x_{j}}{\vert x\vert^2}+\,\frac{(x^{1})^2}{\vert x\vert^2}\delta_{i j}+O(\vert x\vert^{2-\tau})\bigg]\,\xi^{i}\!\otimes \xi^{j}\,,
\nonumber\\
\mathrm{h}\,=\,\mathrm{h}_{ij}\,\xi^{i}\!\otimes \xi^{j}&\,=\,\frac{\nabla \nabla u(X_{i},X_{j})}{\vert \nabla u\vert}\,\xi^{i}\!\otimes \xi^{j}\,=\,\frac{1}{\vert x\vert^{5}}\bigg[\frac{x_{i}x_{j}}{\vert x\vert^2}+\,\frac{(x^{1})^2}{\vert x\vert^2}\delta_{i j}+O(\vert x\vert^{2-\tau})\bigg]\,\xi^{i}\!\otimes \xi^{j}\,, \nonumber
\end{align}
where $i,j \in \{ 2,3\}$. From the first of above expressions and by the fact that $|x^1| > \ep |x|$, it follows that $g_\Sigma^{ij} = O(|x|^4)$ on $U_1$, and in turn that 
\begin{equation}
\vert \mathring{\mathrm{h}}\vert^{2}\,=\,g_{\Sigma}^{i k}\,g_{\Sigma}^{j l}\Big(\mathrm{h}_{il}-\frac{\mathrm{H}}{2}g^{\Sigma}_{il}\Big)\Big(\mathrm{h}_{jk}-\frac{\mathrm{H}}{2}g^{\Sigma}_{jk}\Big)
\,=\, O(\vert x\vert^{2-2\tau})\,,
\end{equation}
where we also took advantage of expansion~\eqref{ffeq13} in order to get the desired cancellation. Since the same estimate holds on $U_2$ and $U_3$, we finally have that~\eqref{bigo_part} is proven, so that in turn also Claim~\eqref{ffeq1} is proven. Using the third estimate in~\eqref{ffeq3}, we have that Claim~\eqref{ffeq1} can be reformulated as follows
\begin{equation*}
\frac{F_{q}(t)}{8\pi}\,=\,\frac{\Ro(q)}{16\pi}\,\,\vert \Omega(t)\vert+o(t^{3})\,.
\end{equation*}
Therefore, in order to obtain the expansion ~\eqref{ffeq20}, it is sufficient to show that
\begin{equation*}
\label{ffeq22}
\lim_{t\to 0^+}\frac{\vert \Omega(t)\vert}{\frac{4\pi}{3}\,t^3}\,=\,1\,.
\end{equation*}
By l'H\^opital's rule, this reduces to proving that 
\begin{equation*}\label{ffeq22}
\lim_{t\to 0^+}\frac{\vert \Omega(t)\vert'}{{4\pi}\,t^2}\,=\,1\,.
\end{equation*}
The latter statement follows from~\eqref{ffeq11}, combined with 
\begin{equation*}
|\Sigma_t| \, = \int_{\Sigma_t} \!\!d \sigma \, = \int_{\Sigma_t} \!\frac{|\na u|}{(1-u)^2} \, d \sigma \, + \, O(t^{4-\tau}) \, = \, 4 \pi t^2\, + \, O(t^{4-\tau}) \, ,
\end{equation*}
where in the last two identities we used~\eqref{espansione utile} and the fact that $\int_{\Sigma_t} \!|\na u| \,d \sigma = 4 \pi$.
\end{proof}


\section{Further directions: Geroch--type calculation for $p$-harmonic functions.}


In this last section, we extend the monotonicity formula~\eqref{monoliscia} to the nonlinear potential theoretic setting, where harmonic functions are replaced by $p$-harmonic functions, with $1<p<3$. Subsequently, we show how these new monotonicity formulas can be employed to deduce {the Riemannian Penrose Inequality under favourable assumptions.} The treatment of the general case is beyond the purposes of the present note and it is deferred to~\cite{Ago_Maz_oro_3}. We consider, for every $1<p<3$, the unique solution $u$ to the problem
\begin{equation}\label{f1}
\begin{cases}
\Delta_{p} u=0\  &\mathrm{in} \ {M},\\
\quad \,  u=0 &\mathrm{on} \  \partial M,\\
\quad \, u \to 1 &\mathrm{at} \  \infty,
\end{cases}
\end{equation}
where $(M,g)$ is a $3$-dimensional, complete, asymptotically flat Riemannian manifold with nonnegative scalar curvature and {smooth, connected, minimal boundary}. Assume that $|\na u| \neq 0$ everywhere, so that $u$ is smooth.

\begin{remark}
Notice that the condition $|\na u| \neq 0$, though far from being optimal, is automatically satisfied by relevant classes of boundary geometries. For example, following~\cite{Lewis}, one can show that, in the Euclidean setting, mean convex and star-shaped boundaries give rise to $p$-capacitary potentials whose gradient is never vanishing. It is also worth noticing that the monotonicity of the functions $F_{p}$, defined below, still holds if the set of the critical values of the $p$-harmonic function $u$ is negligible and the regular level sets of $u$ are connected.
\end{remark}

To introduce our monotonic quantities, we recall that the $p$-capacity of $\partial M$ is defined as 
\begin{equation}\label{pcapacity}
\mathrm{Cap}_{p}(\partial M):=\inf \Bigg\{ \int\limits_{M}\vert  \nabla v \vert ^{p}\,d\mu:\, v\in C^{\infty}_{c}(M), \, v=1\,\, \text{on}\,\, \partial M\Bigg\}\,,
\end{equation}
which is related to $u$ through the well known identities
\begin{equation}\label{eq24}
\mathrm{Cap}_{p}(\partial M)\, =\int\limits_{M}\vert \nabla u\vert^{p}\,d\mu \, = \!\!\!\!\int\limits_{\{u=t\}} \!\!\!\vert \nabla u\vert^{p-1}\,d\sigma .
\end{equation}
In analogy with~\eqref{eq0}, we define, for every $1<p<3$, the function 
\begin{align}
\label{defFp}
F_{p}(t)&\, = \,\, 4\pi t \, -\, \frac{t^{\frac{2}{p-1}}}{c_{p}} \!\!\!\!\int\limits_{\{u=\alpha_{p}(t)\}} \!\!\!\! \vert \nabla u\vert \, \mathrm{H}\, d\sigma \, + \, \frac{t^{\frac{5-p}{p-1}}}{c_{p}^{2}} \!\!\!\int\limits_{\{u=\alpha_{p}(t)\}} \!\!\!\vert \nabla u\vert^{2} \, d\sigma \, ,
\end{align}
where 
\begin{equation}
\label{eq23}
c_{p}
\,=\,
\Bigg(\frac{\mathrm{Cap}_{p}(\partial M)}{4\pi}\Bigg)^{\!\frac{1}{p-1}},
\qquad\qquad
\alpha_{p}(t)
\,=\,
1-c_{p}\,\,\frac{p-1}{3-p}\,t^{-\frac{3-p}{p-1}},
\end{equation}
and the variable $t$ ranges in $\left[\big(c_{p}\,\,\frac{p-1}{3-p}\,\big)^{\!\frac{p-1}{3-p}}, + \infty \right)$.
Proceeding in the same spirit as in the smooth proof of Theorem~\ref{effectivemonotonicity}, we compute
\begin{align}
\frac{d}{dt} \!\!\!\!\int\limits_{\{u=\alpha_{p}(t)\}} \!\!\!\!\vert \nabla u\vert^{2}\, d\sigma& \, = \, -\,\frac{3-p}{p-1}\,\,\frac{c_{p}}{t^{\frac{2}{p-1}}}\int\limits_{\{u=\alpha_{p}(t)\}}\vert \nabla u\vert\,\mathrm{H} \, d\sigma\,,\nonumber\\
\frac{d}{dt}\!\!\!\!\int\limits_{\{u=\alpha_{p}(t)\}}\!\!\!\!\vert \nabla u\vert \, \mathrm{H}\, d\sigma&\, = \, \frac{c_{p}}{t^{\frac{2}{p-1}}}\!\int\limits_{\{u=\alpha_{p}(t)\}}\Bigg\{\frac{p-2}{p-1}\,\,\mathrm{H}^{2} -
\vert \nabla u \vert  \left[ \, \Delta_{\Sigma_{t}} \!\left(\frac{1}{\vert \nabla u \vert}\right) + \, \frac{\vert \mathrm{h}\vert^{2}+\Ric (\nu,\nu)}{\vert \nabla u\vert} \, \right] 
\Bigg\}d\sigma\,,\nonumber
\end{align}
where $\Sigma_t= \{ u = \alpha_p(t)\}$. Using the traced Gauss equation, as in Section~\ref{effmon}, we obtain 
\begin{align}
\frac{p-2}{p-1}\,\mathrm{H}^{2}&-\vert \nabla u \vert  \left[ \, \Delta_{\Sigma_{t}} \!\left(\frac{1}{\vert \nabla u \vert}\right) + \, \frac{\vert \mathrm{h}\vert^{2}+\Ric (\nu,\nu)}{\vert \nabla u\vert} \, \right] \nonumber\\
&\qquad\qquad = \,\, \Delta_{\Sigma_{t}}(\log \vert \nabla u \vert ) - \frac{\vert\,\nabla^{\Sigma_t}\vert \nabla u\vert\,\vert^{2}}{\vert \nabla u \vert^{2}} - \frac{\Ro}{2} +\!\frac{\,\rm{R}^{\Sigma_{t}}}{2} - \frac{\,\vert \mathring{\mathrm{h}}\vert^{2}}{2} 
- \frac{5-p}{4(p-1)}\,\,\mathrm{H}^{2} .\nonumber
\end{align}
By standard manipulations, we finally get
\begin{equation}
\label{monoliscia_p}
F_p'(t) = 4\pi - \int\limits_{\Sigma_t}\!\!\frac{\,\rm{R}^{\Sigma_{t}}}{2} \, d\sigma  \, + \! \int\limits_{\Sigma_t}\!\left[ \, \frac{\vert\,\nabla^{\Sigma_t}\vert \nabla u\vert\,\vert^{2}}{\vert \nabla u \vert^{2}}+\frac{\Ro}{2}+\frac{\,\vert \mathring{\mathrm{h}}\vert^{2}}{2}\, +\frac{5-p}{4(p-1)}\left[\frac{2(p-1)}{3-p}\frac{\vert \nabla u \vert}{1-u} -\mathrm{H}\right]^{2}\right] d\sigma.
\end{equation}
Again, the right hand side is nonnegative by virtue of Gauss-Bonnet Theorem and by the assumption $\Ro \geq 0$.

{Let us illustrate how the above monotonicity result might be employed to deduce the Riemannian Penrose Inequality.
We start observing} that the monotonicity of $F_p$ implies 
\begin{equation}\label{eq21}
F_{p}(\beta_{p})
\,\leq\,
\lim\limits_{t\to+\infty}F_{p}(t)\,,
\end{equation}
where we have used the short-hand notation 
$\beta_{p} = \big(c_{p}\,\,\frac{p-1}{3-p}\,\big)^{\frac{p-1}{3-p}}$. 
By the minimality of $\pa M$, we immediately get
\begin{equation}\label{eq22}
F_{p}(\beta_{p})\,= \, 4\pi\beta_{p} \,+\,\frac{\beta_{p}^{\frac{5-p}{p-1}}}{c_{p}^{2}}\int\limits_{\partial M}\vert \nabla u\vert^{2} \, d\sigma \, \geq \, 4\pi\beta_{p}\,.
\end{equation}
To compute the limit on the right hand side of~\eqref{eq21}, we 
first observe that $F_p$ rewrites as
\begin{align}
F_{p}(t)
\,\,=\!\!\!\!\!
\int\limits_{\{u=\alpha_{p}(t)\}} \!\!\!\!\!I_{p}\,\vert \nabla u\vert^{p-1}d\sigma\,,\nonumber
\end{align}
where
\begin{align}
I_{p}
&:=c_{p}^{\frac{3p-7}{3-p}}\Big(\,\frac{p-1}{3-p}\,\frac{1}{1-u}\,\Big)^{\!\frac{p-1}{3-p}}\Big[ c_{p}^{3-p}+\Big(\,\frac{p-1}{3-p}\,\frac{c_{p}}{1-u}\,\Big)^{2}\vert \nabla u \vert^{3-p}-\frac{p-1}{3-p}\,\frac{c^{2}_{p}}{1-u}\vert \nabla u\vert^{2-p}\mathrm{H}\Big]\,.\nonumber
\end{align}
To evaluate the asymptotic behaviour of $I_p$, {we assume that, in a distinguished Schwarzschildian coordinate chart $(x^{1},x^{2},x^{3})$ at infinity,} the function $u$ satisfies \begin{equation}\label{asy.exp.ofu}
 u\, = \, 1-\frac{p-1}{3-p}\,\frac{c_{p}}{\vert x\vert^{\frac{3-p}{p-1}}}+\frac{3-p}{2}\,\frac{c_{p}}{\vert x\vert^{\frac{2}{p-1}}}\,\big(m+\phi (x/\vert x\vert)\big)+o_{2}\big(\vert x\vert^{-\frac{2}{p-1}}\big),
\end{equation}
for some $\phi$ satisfying $\Delta_{\SSS^{2}}\phi=-2\phi$. 
The above expansion implies in particular that
\begin{align}
\vert \nabla u\vert&=\frac{c_{p}}{\,\,\vert x\vert^{\frac{2}{p-1}}}\Big[\,1-\frac{2}{p-1}\,\frac{m}{\vert x\vert}-\frac{3-p}{p-1}\,\frac{\phi (x/\vert x\vert)}{\vert x\vert}+o\big(\vert x\vert^{-1}\big)\,\Big]\,,\nonumber\\
\mathrm{H}\, = \,-(p-1)\,\frac{\nabla \nabla u (\nabla u,\nabla u)}{\vert \nabla u\vert^{3}} &=\frac{2}{\vert x\vert}\Big[\,1-\frac{2m}{\vert x\vert}-\frac{3-p}{2}\,\frac{\phi (x/\vert x\vert)}{\vert x\vert}+o\big(\vert x\vert^{-1}\big)\,\Big]\,.\nonumber
\end{align}
In turn, we get
\begin{align}
\lim_{|x|\to + \infty}I_{p}(x) \, = \, 2m\,c_{p}^{1-p} \, .
\end{align}
In other words, for every $\ep>0$, there exists $t_\ep>0$ such that, whenever $u (x)\geq \alpha_p(t_\ep)$, it holds
$$
2m\,c_{p}^{1-p}-\varepsilon\leq I_{p}(x)\leq 2m\,c_{p}^{1-p}+\varepsilon \,.
$$
Therefore, for every $t \geq t_\ep$, we get that
\begin{align}
8\pi m-4\pi c_{p}^{p-1}\,\varepsilon \, \leq \, F_{p}(t) \, \leq\,8\pi m+4\pi c_{p}^{p-1}\,\varepsilon\,,\nonumber
\end{align}
where we have used~\eqref{eq24} and~\eqref{eq23},
and in turn that 
$$
\lim_{t \to +\infty} F_p(t) \, = \, 8 \pi m \, .
$$
Combining this limit with the inequalities in~\eqref{eq21} 
and~\eqref{eq22} yields $2m \geq \beta_{p}$. 
By definition of $\beta_p$ and $c_p$, this means that
$$
2m \, \geq \,\Big(\frac{p-1}{3-p}\,\Big)^{\!\frac{p-1}{3-p}}\Bigg(\frac{\mathrm{Cap}_{p}(\partial M)}{4\pi}\Bigg)^{\!\!\frac{1}{3-p}}\,. 
$$
Finally, passing to the limit as $p\to 1^{+}$ with the help of~\cite[Theorem 1.2]{FM}, we get that
$$m\geq \sqrt{\frac{\vert\partial M^*\vert}{16\pi}}\,,$$
where $\pa M^*$ denotes the strictly outward minimising hull of $\pa M$
and is defined as in~\cite{FM}. 
{Assuming now that $\pa M$ is an outermost minimal surface, then} it is also strictly outward minimising and thus $|\pa M^*| = |\pa M|$, 
which implies the validity of the desired inequality.

\bigskip
\bigskip

\noindent
\textbf{{Acknowledgements.}} {\em 
The authors are members of the Gruppo Nazionale per l'Analisi Matematica, la Probabilit\`a e le loro Applicazioni (GNAMPA), which is part of the Istituto Nazionale di Alta Matematica (INdAM). Francesca Oronzio is supported by the ERC--STG Grant 759229: HiCoS ``Higher Co--dimension Singularities: Minimal Surfaces and the Thin Obstacle Problem''. The authors thank G.~Ascione, L.~Benatti and C.~Mantegazza for various useful discussions during the preparation of the manuscript. They also thank P.T.~Chru\'sciel, G.~Huisken for their interest on this work.}

\bibliographystyle{amsplain}

\providecommand{\bysame}{\leavevmode\hbox to3em{\hrulefill}\thinspace}
\providecommand{\MR}{\relax\ifhmode\unskip\space\fi MR }
\providecommand{\MRhref}[2]{%
  \href{http://www.ams.org/mathscinet-getitem?mr=#1}{#2}
}
\providecommand{\href}[2]{#2}

\end{document}